\newtheorem{theorem}{Theorem}
\newtheorem{proposition}[theorem]{Proposition}
\newtheorem{remark}[theorem]{Remark}
\newenvironment{proof}[1][Proof]{\noindent\textbf{#1.} }{\ \rule{0.5em}{0.5em}}
\begin{document}

\title{On the Distribution of the Adaptive LASSO Estimator}
\author{Benedikt M. P\"otscher and Ulrike Schneider \\
Department of Statistics, University of Vienna}
\date{First version: December 2007\\
This version: December 2008}
\maketitle

\begin{abstract}
We study the distribution of the adaptive LASSO estimator (\cite{zou06}) in
finite samples as well as in the large-sample limit. The large-sample
distributions are derived both for the case where the adaptive LASSO
estimator is tuned to perform conservative model selection as well as for
the case where the tuning results in consistent model selection. We show
that the finite-sample as well as the large-sample distributions are
typically highly non-normal, regardless of the choice of the tuning
parameter. The uniform convergence rate is also obtained, and is shown to be
slower than $n^{-1/2}$ in case the estimator is tuned to perform consistent
model selection. In particular, these results question the statistical
relevance of the `oracle' property of the adaptive LASSO estimator
established in \cite{zou06}. Moreover, we also provide an impossibility
result regarding the estimation of the distribution function of the adaptive
LASSO estimator. The theoretical results, which are obtained for a
regression model with orthogonal design, are complemented by a Monte Carlo
study using non-orthogonal regressors.

\textit{MSC 2000 subject classification}. Primary 62F11, 62F12, 62E15,
62J05, 62J07.

\textit{Key words and phrases}. Penalized maximum likelihood, LASSO,
adaptive LASSO, nonnegative garotte, finite-sample distribution, asymptotic
distribution, oracle property, estimation of distribution, uniform
consistency.
\end{abstract}


\section{Introduction}


Penalized maximum likelihood estimators have been studied intensively in the
last few years. A prominent example is the least absolute selection and
shrinkage (LASSO) estimator of \cite{tib96}. Related variants of the LASSO
include the Bridge estimators studied by \cite{frafri93}, least angle
regression (LARS) of \cite{efretal04}, or the smoothly clipped absolute
deviation (SCAD) estimator of \cite{fanli01}. Other estimators that fit into
this framework are hard- and soft-thresholding estimators. While many
properties of penalized maximum likelihood estimators are now well
understood, the understanding of their distributional properties, such as
finite-sample and large-sample limit distributions, is still incomplete. The
probably most important contribution in this respect is \cite{knifu00} who
study the asymptotic distribution of the LASSO estimator (and of Bridge
estimators more generally) when the tuning parameter governing the influence
of the penalty term is chosen in such a way that the LASSO acts as a
conservative model selection procedure (that is, a procedure that does not
select underparameterized models asymptotically, but selects
overparameterized models with positive probability asymptotically). In \cite%
{knifu00}, the asymptotic distribution is obtained in a fixed-parameter as
well as in a standard local alternatives setup. This is complemented by a
result in \cite{zou06} who considers the fixed-parameter asymptotic
distribution of the LASSO when tuned to act as a consistent model selection
procedure. Another contribution is \cite{fanli01} who derive the
fixed-parameter asymptotic distribution of the SCAD estimator when the
tuning parameter is chosen in such a way that the SCAD estimator performs
consistent model selection; in particular, they establish the so-called
`oracle' property for this estimator. \cite{zou06} introduced a variant of
the LASSO, the so-called adaptive LASSO estimator, and established the
`oracle' property for this estimator when suitably tuned. Since it is
well-known that fixed-parameter (i.e., pointwise) asymptotic results can
give a wrong picture of the estimator's actual behavior, especially when the
estimator performs model selection (see, e.g., \cite{kab95}, or \cite%
{leepoe05, poelee07, leepoe08a}), it is important to take a closer look at
the actual distributional properties of the adaptive LASSO estimator.

In the present paper we study the finite-sample as well as the large-sample
distribution of the adaptive LASSO estimator in a linear regression model.
In particular, we study both the case where the estimator is tuned to
perform conservative model selection as well as the case where it is tuned
to perform consistent model selection. We find that the finite-sample
distributions are highly non-normal (e.g., are often multimodal) and that a
standard fixed-parameter asymptotic analysis gives a highly misleading
impression of the finite-sample behavior. In particular, the `oracle'
property, which is based on a fixed-parameter asymptotic analysis, is shown
not to provide a reliable assessment of the estimators' actual performance.
For these reasons, we also obtain the large-sample distributions of the
above mentioned estimators under a general \textquotedblleft moving
parameter\textquotedblright\ asymptotic framework, which much better
captures the actual behavior of the estimator. [Interestingly, it turns out
that in case the estimator is tuned to perform consistent model selection a
\textquotedblleft moving parameter\textquotedblright\ asymptotic framework
more general than the usual $n^{-1/2}$-local asymptotic framework is
necessary to exhibit the full range of possible limiting distributions.]
Furthermore, we obtain the uniform convergence rate of the adaptive LASSO
estimator and show that it is slower than $n^{-1/2}$ in the case where the
estimator is tuned to perform consistent model selection. This again exposes
the misleading character of the `oracle' property. We also show that the
finite-sample distribution of the adaptive LASSO estimator cannot be
estimated in any reasonable sense, complementing results of this sort in the
literature such as \cite{leepoe06a, leepoe06b, poelee07, leepoe08b} and \cite%
{poe06}.

Apart from the papers already mentioned, there has been a recent surge of
publications establishing the `oracle' property for a variety of penalized
maximum likelihood or related estimators (e.g., \cite{bun04, bunmck05,
fanli02, fanli04, lilia07, wanlen07}, %
\citetalias{wanglijia07,wanglitsa07,wanrlitsa07}, \cite{yualin07, zhalu07,
zouyua08, zouli08, johetal08}). The `oracle' property also paints a
misleading picture of the behavior of the estimators considered in these
papers; see the discussion in \cite{leepoe05, yan05, poe07, poelee07,
leepoe08a}.

The paper is organized as follows. The model and the adaptive LASSO
estimator are introduced in Section~\ref{mo_est}. In Section \ref{theory} we
study the estimator theoretically in an orthogonal linear regression model.
In particular, the model selection probabilities implied by the adaptive
LASSO estimator are discussed in Section~\ref{prob}. Consistency, uniform
consistency, and uniform convergence rates of the estimator are the subject
of Section~\ref{cons&ucons}. The finite-sample distributions are derived in
Section~\ref{finite}, whereas the asymptotic distributions are studied in
Section~\ref{asydistribs}. We provide an impossibility result regarding the
estimation of the adaptive LASSO's distribution function in Section~\ref{imp}%
. Section \ref{simulation} studies the behavior of the adaptive LASSO
estimator by Monte Carlo without imposing the simplifying assumption of
orthogonal regressors. We finally summarize our findings in Section~\ref%
{conl}. Proofs and some technical details are deferred to an appendix.


\section{The Adaptive LASSO Estimator \label{mo_est}}


We consider the linear regression model 
\begin{equation}
Y=X\theta +u  \label{model}
\end{equation}%
where $X$ is a nonstochastic $n\times k$ matrix of rank $k$ and $u$ is
multivariate normal with mean zero and variance-covariance matrix $\sigma
^{2}I_{n}$. Let $\hat{\theta}_{LS}=(X^{\prime }X)^{-1}X^{\prime }Y$ denote
the least squares (maximum likelihood) estimator. The adaptive LASSO
estimator $\hat{\theta}_{A}$ is defined as the solution to the minimization
problem%
\begin{equation}
(Y-X\theta )^{\prime }(Y-X\theta )+2n\mu _{n}^{2}\sum_{i=1}^{k}|\theta
_{i}|/|\hat{\theta}_{LS,i}|  \label{alasso}
\end{equation}%
where the tuning parameter $\mu _{n}$ is a positive real number. As long as $%
\hat{\theta}_{LS,i}\neq 0$ for every $i$, the function given by (\ref{alasso}%
) is well-defined and strictly convex and hence has a uniquely defined
minimizer $\hat{\theta}_{A}$. [The event where $\hat{\theta}_{LS,i}=0$ for
some $i$ has probability zero under the probability measure governing $u$.
Hence, it is inconsequential how we define $\hat{\theta}_{A}$ on this event;
for reasons of convenience, we shall adopt the convention that $\mu
_{n}^{2}/|\hat{\theta}_{LS,i}|=0$ if $\hat{\theta}_{LS,i}=0$. Furthermore, $%
\hat{\theta}_{A}$ is a measurable function of $Y$.] Note that \cite{zou06}
uses $\lambda _{n}=2n\mu _{n}^{2}$ as the tuning parameter. \cite{zou06}
also considers versions of the adaptive LASSO estimator for which $|\hat{%
\theta}_{LS,i}|$ in (\ref{alasso}) is replaced by $|\hat{\theta}%
_{LS,i}|^{\gamma }$. However, we shall exclusively concentrate on the
leading case $\gamma =1$. As pointed out in \cite{zou06}, the adaptive LASSO
is closely related to the nonnegative Garotte estimator of \cite{brei95}.


\section{Theoretical Analysis\label{theory}}


For the theoretical analysis in this section we shall make some simplifying
assumptions. First, we assume that $\sigma ^{2}$ is known, whence we may
assume without loss of generality that $\sigma ^{2}=1$. Second, we assume
orthogonal regressors, i.e., $X^{\prime }X$ is diagonal. The latter
assumption will be removed in the Monte Carlo study in Section \ref%
{simulation}. Orthogonal regressors occur in many important settings,
including wavelet regression or the analysis of variance. More specifically,
we shall assume $X^{\prime }X=nI_{k}$. In this case the minimization of (\ref%
{alasso}) is equivalent to separately minimizing 
\begin{equation}
n(\hat{\theta}_{LS,i}-\theta _{i})^{2}+2n\mu _{n}^{2}|\theta _{i}|/|\hat{%
\theta}_{LS,i}|  \label{alasso2}
\end{equation}%
for $i=1,\ldots ,k$. Since the estimators $\hat{\theta}_{LS,i}$ are
independent, so are the components of $\hat{\theta}_{A}$, provided $\mu _{n}$
is nonrandom which we shall assume for the theoretical analysis throughout
this section. To study the joint distribution of $\hat{\theta}_{A}$, it
hence suffices to study the distribution of the individual components.
Hence, we may assume\emph{\ without loss of generality }that\emph{\ }$\theta 
$ is scalar, i.e., $k=1$, for the rest of this section. In fact, as is
easily seen, there is then no loss of generality to even assume that $X$ is
just a column of $1$'s, i.e., we may then consider a simple Gaussian
location problem where $\hat{\theta}_{LS}=\bar{y}$, the arithmetic mean of
the independent and identically $N(\theta ,1)$-distributed observations $%
y_{1},\ldots ,y_{n}$. Under these assumptions, the minimization problem
defining the adaptive LASSO has an explicit solution of the form%
\begin{equation}
\hat{\theta}_{A}=\bar{y}(1-\mu _{n}^{2}/\bar{y}^{2})_{+}=\left\{ 
\begin{array}{cc}
0 & \text{if }\;|\bar{y}|\leq \mu _{n} \\ 
\bar{y}-\mu _{n}^{2}/\bar{y} & \text{if }\;|\bar{y}|>\mu _{n}.%
\end{array}%
\right.  \label{thetaA}
\end{equation}%
The explicit formula (\ref{thetaA}) also shows that in the location model
(and, more generally, in the diagonal regression model) the adaptive LASSO
and the nonnegative Garotte coincide, and thus the results in the present
section also apply to the latter estimator. In view of (\ref{thetaA}) we
also note that in the diagonal regression model the adaptive LASSO is
nothing else than a positive-part Stein estimator applied \emph{componentwise%
}. Of course, this is not in the spirit of Stein estimation.


\subsection{Model selection probabilities and tuning parameter \label{prob}}


The adaptive LASSO estimator $\hat{\theta}_{A}$ can be viewed as performing
a selection between the restricted model $M_{R}$ consisting only of the $%
N(0,1)$-distribution and the unrestricted model $M_{U}=\{N(\theta ,1):\theta
\in \mathbb{R}\}$ in an obvious way, i.e., $M_{R}$ is selected if $\hat{%
\theta}_{A}=0$ and $M_{U}$ is selected otherwise. We now study the model
selection probabilities, i.e., the probabilities that model $M_{U}$ or $%
M_{R} $, respectively, is selected. As these selection probabilities add up
to one, it suffices to consider one of them. The probability of selecting
the restricted model $M_{R}$ is given by%
\begin{eqnarray}
P_{n,\theta }(\hat{\theta}_{A}=0) &=&P_{n,\theta }(|\bar{y}|\leq \mu
_{n})=\Pr (|Z+n^{1/2}\theta |\leq n^{1/2}\mu _{n})  \notag \\
&=&\Phi (-n^{1/2}\theta +n^{1/2}\mu _{n})-\Phi (-n^{1/2}\theta -n^{1/2}\mu
_{n}),  \label{prob_MR}
\end{eqnarray}%
where $Z$ is a standard normal random variable with cumulative distribution
function (cdf) $\Phi $. We use $P_{n,\theta }$ to denote the probability
governing a sample of size $n$ when $\theta $ is the true parameter, and $%
\Pr $ to denote a generic probability measure.

In the following we shall always impose the condition that $\mu
_{n}\rightarrow 0$ for asymptotic considerations, which guarantees that the
probability of incorrectly selecting the restricted model $M_{R}$ (i.e.,
selecting $M_{R}$ if the true $\theta $ is non-zero) vanishes
asymptotically. Conversely, if this probability vanishes asymptotically for 
\emph{every} $\theta \neq 0$, then $\mu _{n}\rightarrow 0$ follows, hence
the condition $\mu _{n}\rightarrow 0$ is a basic one and without it the
estimator $\hat{\theta}_{A}$ does not seem to be of much interest.

Given the condition that $\mu _{n}\rightarrow 0$, two cases need to be
distinguished: (i) $n^{1/2}\mu _{n}\rightarrow \mathfrak{m}$, $0\leq 
\mathfrak{m}<\infty $ and (ii) $n^{1/2}\mu _{n}\rightarrow \infty $.%
\footnote{%
There is no loss in generality here in the sense that the general case where
only $\mu _{n}\rightarrow 0$ holds can always be reduced to case (i) or case
(ii) by passing to subsequences.} In case (i), the adaptive LASSO estimator
acts as a conservative model selection procedure, meaning that the
probability of selecting the larger model $M_{U}$ has a positive limit even
when $\theta =0$, whereas in case (ii), $\hat{\theta}_{A}$ acts as a
consistent model selection procedure, i.e., this probability vanishes in the
limit when $\theta =0$. This is immediately seen by inspection of (\ref%
{prob_MR}). In different guise, these facts have long been known, see \cite%
{bauetal88}. In his analysis of the adaptive LASSO estimator \cite{zou06}
assumes $n^{1/4}\mu _{n}\rightarrow 0$ and $n^{1/2}\mu _{n}\rightarrow
\infty $, hence he considers a subcase of case (ii). We shall discuss the
reason why \cite{zou06} imposes the stricter condition $n^{1/4}\mu
_{n}\rightarrow 0$ in Section \ref{asydistribs}.

The asymptotic behavior of the model selection probabilities discussed in
the preceding paragraph is of a \textquotedblleft
pointwise\textquotedblright\ asymptotic nature in the sense that the value
of $\theta $ is held fixed when $n\rightarrow \infty $. Since pointwise
asymptotic results often miss essential aspects of the finite-sample
behavior, we next present a \textquotedblleft moving
parameter\textquotedblright\ asymptotic analysis, i.e., we allow $\theta $
to vary with $n$ in the asymptotic analysis, which better reveals the
features of the problem in finite samples. Note that the following
proposition in particular shows that the convergence of the model selection
probability to its limit in a pointwise asymptotic analysis is \emph{not}
uniform in $\theta \in \mathbb{R}$ (in fact, it fails to be uniform in any
neighborhood of $\theta =0$).

\begin{proposition}
\label{selection_prob} Assume $\mu _{n}\rightarrow 0$ and $n^{1/2}\mu
_{n}\rightarrow \mathfrak{m}$ with $0\leq \mathfrak{m}\leq \infty $. \newline
(i) Assume $0\leq \mathfrak{m}<\infty $ (corresponding to conservative model
selection). Suppose that the true parameter $\theta _{n}\in \mathbb{R}$
satisfies $n^{1/2}\theta _{n}\rightarrow \nu \in \mathbb{R}\cup \{-\infty
,\infty \}$. Then 
\begin{equation*}
\lim_{n\rightarrow \infty }P_{n,\theta _{n}}(\hat{\theta}_{A}=0)=\Phi (-\nu +%
\mathfrak{m})-\Phi (-\nu -\mathfrak{m}).
\end{equation*}%
(ii) Assume $\mathfrak{m}=\infty $ (corresponding to consistent model
selection). Suppose $\theta _{n}\in \mathbb{R}$ satisfies $\theta _{n}/\mu
_{n}\rightarrow \zeta \in \mathbb{R}\cup \{-\infty ,\infty \}$. Then

\begin{enumerate}
\item $|\zeta| <1$ implies $\lim_{n \rightarrow \infty} P_{n,\theta_n}(\hat{%
\theta}_A=0) = 1$,

\item $|\zeta|=1$ and $n^{1/2}(\mu_n-\zeta \theta_n) \rightarrow r$ for some 
$r \in \mathbb{R} \cup \{-\infty,\infty\}$, implies $\lim_{n \rightarrow
\infty } P_{n,\theta_n}(\hat{\theta}_A=0)= \Phi(r)$,

\item $|\zeta |>1$ implies $\lim_{n\rightarrow \infty }P_{n,\theta _{n}}(%
\hat{\theta}_{A}=0)=0$.
\end{enumerate}
\end{proposition}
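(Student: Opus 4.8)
The plan is to reduce everything to the exact finite-sample identity (\ref{prob_MR}), which, evaluated at the moving parameter $\theta_n$, reads
\[
P_{n,\theta_n}(\hat{\theta}_A=0)=\Phi(a_n)-\Phi(b_n),\qquad a_n:=-n^{1/2}\theta_n+n^{1/2}\mu_n,\quad b_n:=-n^{1/2}\theta_n-n^{1/2}\mu_n.
\]
Once this is in hand, the proof in every regime amounts to determining the limits of the two arguments $a_n$ and $b_n$ in $\Rquer$ and then invoking the continuity of $\Phi$ on the extended real line, with the conventions $\Phi(-\infty)=0$ and $\Phi(+\infty)=1$. No probabilistic estimate beyond (\ref{prob_MR}) is required; the whole argument is a sign-bookkeeping exercise.

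For part (i), since $\mathfrak{m}<\infty$, both $n^{1/2}\mu_n\to\mathfrak{m}$ and $n^{1/2}\theta_n\to\nu$ converge, so $a_n\to-\nu+\mathfrak{m}$ and $b_n\to-\nu-\mathfrak{m}$; these sums are unambiguous even when $\nu=\pm\infty$ precisely because $\mathfrak{m}$ is finite. Continuity of $\Phi$ then yields the stated limit $\Phi(-\nu+\mathfrak{m})-\Phi(-\nu-\mathfrak{m})$ at once.

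For part (ii) I would factor out $n^{1/2}\mu_n$, writing $a_n=n^{1/2}\mu_n(1-\theta_n/\mu_n)$ and $b_n=-n^{1/2}\mu_n(1+\theta_n/\mu_n)$ and using $n^{1/2}\mu_n\to\infty$. When $|\zeta|<1$ the factors $1-\theta_n/\mu_n\to1-\zeta$ and $1+\theta_n/\mu_n\to1+\zeta$ both have strictly positive limits, forcing $a_n\to+\infty$ and $b_n\to-\infty$, so the difference tends to $1-0=1$ (Item 1). When $|\zeta|>1$, including $\zeta=\pm\infty$, the two factors share a common sign, driving $a_n$ and $b_n$ to the \emph{same} infinity, so $\Phi(a_n)-\Phi(b_n)\to0$ (Item 3); here I simply check the sign patterns for $\zeta>1$ and $\zeta<-1$ separately.

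I expect the only delicate point to be Item 2, $|\zeta|=1$, where one of the factors degenerates to $0$ and the crude factoring no longer resolves the associated argument — this is exactly where the refined hypothesis $n^{1/2}(\mu_n-\zeta\theta_n)\to r$ is needed. For $\zeta=1$ the first argument is literally $a_n=n^{1/2}(\mu_n-\theta_n)\to r$, while $b_n=-n^{1/2}\mu_n(1+\theta_n/\mu_n)\to-\infty$ as before, giving the limit $\Phi(r)-0=\Phi(r)$. For $\zeta=-1$ the roles swap: $a_n\to+\infty$ while $b_n=-n^{1/2}(\mu_n+\theta_n)\to-r$, so the limit is $1-\Phi(-r)$, which equals $\Phi(r)$ by the symmetry $1-\Phi(-r)=\Phi(r)$ of the standard normal cdf. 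Allowing $r=\pm\infty$ is harmless under the same $\Rquer$-conventions. The main obstacle, such as it is, is therefore purely organizational: isolating the $|\zeta|=1$ case cleanly and keeping the signs straight.
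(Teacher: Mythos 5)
Your proposal is correct and takes essentially the same route as the paper: the paper omits the proof, stating it is identical to that of Proposition 1 in \cite{poelee07}, which likewise consists of passing to the limit in the exact finite-sample expression (\ref{prob_MR}) after rewriting the two arguments of $\Phi$ in terms of $n^{1/2}\mu_n$ and $\theta_n/\mu_n$. Your case analysis, including the delicate $|\zeta|=1$ case and the use of $1-\Phi(-r)=\Phi(r)$ for $\zeta=-1$, checks out.
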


The proof of Proposition~\ref{selection_prob} is identical to the proof of
Proposition 1 in \cite{poelee07} and hence is omitted. The above proposition
in fact completely describes the large-sample behavior of the model
selection probability without \emph{any} conditions on the parameter $\theta 
$, in the sense that all possible accumulation points of the model selection
probability along \emph{arbitrary} sequences of $\theta _{n}$ can be
obtained in the following manner: Apply the result to subsequences and
observe that, by compactness of $\mathbb{R}\cup \{-\infty ,\infty \}$, we
can select from every subsequence a further subsequence such that all
relevant quantities such as $n^{1/2}\theta _{n}$, $\theta _{n}/\mu _{n}$, $%
n^{1/2}(\mu _{n}-\theta _{n})$, or $n^{1/2}(\mu _{n}+\theta _{n})$ converge
in $\mathbb{R}\cup \{-\infty ,\infty \}$ along this further subsequence.

In the case of conservative model selection, Proposition~\ref{selection_prob}
shows that the usual local alternative parameter sequences describe the
asymptotic behavior. In particular, if $\theta _{n}$ is local to $\theta =0$
in the sense that $\theta _{n}=\nu /n^{1/2}$, the local alternatives
parameter $\nu $ governs the limiting model selection probability.
Deviations of $\theta _{n}$ from $\theta =0$ of order $1/n^{1/2}$ are
detected with positive probability asymptotically and deviations of larger
order are detected with probability one asymptotically in this case. In the
consistent model selection case, however, a different picture emerges. Here,
Proposition~\ref{selection_prob} shows that local deviations of $\theta _{n}$
from $\theta =0$ that are of the order $1/n^{1/2}$ are not detected by the
model selection procedure at all!\footnote{%
For such deviations this also immediately follows from a contiguity argument.%
} In fact, even larger deviations from zero go asymptotically unnoticed by
the model selection procedure, namely as long as $\theta _{n}/\mu
_{n}\rightarrow \zeta $, $|\zeta |<1$. [Note that these larger deviations
would be picked up by a \emph{conservative} procedure with probability one
asymptotically.] This unpleasant consequence of model selection consistency
has a number of repercussions as we shall see later on. For a more detailed
discussion of these facts in the context of post-model-selection estimators
see \cite{leepoe05}.

The speed of convergence of the model selection probability to its limit in
part (i) of the proposition is governed by the slower of the convergence
speeds of $n^{1/2}\mu _{n}$ and $n^{1/2}\theta _{n}$. In part (ii), it is
exponential in $n^{1/2}\mu _{n}$ in cases 1 and 3, and is governed by the
convergence speed of $n^{1/2}\mu _{n}$ and $n^{1/2}(\mu _{n}-\zeta \theta
_{n})$ in case 2.


\subsection{Uniform consistency and uniform convergence rate of the adaptive
LASSO estimator\label{cons&ucons}}


It is easy to see that the natural condition $\mu _{n}\rightarrow 0$
discussed in the preceding section is in fact equivalent to consistency of $%
\hat{\theta}_{A}$ for $\theta $. Moreover, under this basic condition the
estimator is even uniformly consistent with a certain rate as we show next.

\begin{theorem}
\label{consist_ther} Assume that $\mu _{n}\rightarrow 0$. Then $\hat{\theta}%
_{A}$ is uniformly consistent for $\theta $, i.e., 
\begin{equation}
\lim_{n\rightarrow \infty }\sup_{\theta \in \mathbb{R}}P_{n,\theta }\left(
\left\vert \hat{\theta}_{A}-\theta \right\vert >\varepsilon \right) =0
\label{unif_consist}
\end{equation}%
for every $\varepsilon >0$. Furthermore, let $a_{n}=\min (n^{1/2},\mu
_{n}^{-1})$. Then, for every $\varepsilon >0$, there exists a (nonnegative)
real number $M$ such that 
\begin{equation}
\sup_{n\in \mathbb{N}}\sup_{\theta \in \mathbb{R}}P_{n,\theta }\left(
a_{n}\left\vert \hat{\theta}_{A}-\theta \right\vert >M\right) <\varepsilon
\label{unif_consist_rate}
\end{equation}%
holds. In particular, $\hat{\theta}_{A}$ is uniformly $a_{n}$-consistent.
\end{theorem}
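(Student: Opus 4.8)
The plan is to reduce everything to a single deterministic shrinkage bound comparing $\thetaA$ to the least-squares estimator $\bar{y}$, and then to exploit the fact that under $P_{n,\theta}$ the centered quantity $\bar{y}-\theta$ equals $n^{-1/2}Z$ with $Z$ standard normal, so that the resulting tail bounds become completely free of $\theta$ and uniformity is automatic.

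The key observation, which I would establish first, is that
\[
\left\vert \thetaA-\bar{y}\right\vert \leq \mu _{n}\quad \text{for every realization of }\bar{y}.
\]
This follows directly from the explicit formula (\ref{thetaA}): if $|\bar{y}|\leq \mu _{n}$ then $\thetaA=0$ and $|\thetaA-\bar{y}|=|\bar{y}|\leq \mu _{n}$, while if $|\bar{y}|>\mu _{n}$ then $\thetaA-\bar{y}=-\mu _{n}^{2}/\bar{y}$ and hence $|\thetaA-\bar{y}|=\mu _{n}^{2}/|\bar{y}|<\mu _{n}$. Thus the adaptive LASSO never moves $\bar{y}$ by more than $\mu _{n}$. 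By the triangle inequality this yields
\[
\left\vert \thetaA-\theta \right\vert \leq \mu _{n}+\left\vert \bar{y}-\theta \right\vert =\mu _{n}+n^{-1/2}|Z|,
\]
a bound whose right-hand side does not depend on $\theta $.

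For the uniform consistency statement (\ref{unif_consist}) I would then write, for every $\eps>0$,
\[
\sup_{\theta \in \mathbb{R}}P_{n,\theta }\left( \left\vert \thetaA-\theta \right\vert >\eps\right) \leq \Pr \left( n^{-1/2}|Z|>\eps-\mu _{n}\right) ,
\]
and since $\mu _{n}\rightarrow 0$ the quantity $\eps-\mu _{n}$ is eventually positive with $n^{1/2}(\eps-\mu _{n})\rightarrow \infty $, so the right-hand side tends to zero. For the uniform rate (\ref{unif_consist_rate}) I would multiply the bound by $a_{n}=\min (n^{1/2},\mu _{n}^{-1})$ and use the two elementary identities $a_{n}\mu _{n}=\min (n^{1/2}\mu _{n},1)\leq 1$ and $a_{n}n^{-1/2}=\min (1,(n^{1/2}\mu _{n})^{-1})\leq 1$, giving
\[
a_{n}\left\vert \thetaA-\theta \right\vert \leq a_{n}\mu _{n}+a_{n}n^{-1/2}|Z|\leq 1+|Z|
\]
uniformly in $n$ and $\theta $. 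Hence $\sup_{n}\sup_{\theta }P_{n,\theta }(a_{n}|\thetaA-\theta |>M)\leq \Pr (|Z|>M-1)$, and choosing $M$ large enough that the standard normal tail falls below $\eps$ finishes the argument.

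The only real content here is the shrinkage bound $|\thetaA-\bar{y}|\leq \mu _{n}$; once it is in hand, both parts are immediate, and the uniformity over $\theta $ comes for free precisely because the randomness enters only through $\bar{y}-\theta \sim N(0,n^{-1})$, whose law is identical for every $\theta $. I do not anticipate a genuine obstacle. The one point requiring care is verifying the shrinkage bound in the thresholding region $|\bar{y}|\leq \mu _{n}$, where one must remember that $\thetaA$ is set to $0$ rather than given by the formula $\bar{y}-\mu _{n}^{2}/\bar{y}$; away from this subtlety the estimate is routine.
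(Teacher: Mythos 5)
Your proof is correct, but it takes a different route from the paper's. The paper writes $\hat{\theta}_{A}=\hat{\theta}_{H}-\limfunc{sign}(\hat{\theta}_{H})\mu _{n}^{2}/|\bar{y}|$, where $\hat{\theta}_{H}$ is the hard-thresholding estimator with threshold $\mu _{n}$, invokes an external result (Theorem~2 of P\"otscher and Leeb, 2007) for the uniform $a_{n}$-consistency of $\hat{\theta}_{H}$, and then only has to control the correction term $\mu _{n}^{2}/|\bar{y}|$ on the event $|\bar{y}|>\mu _{n}$, where it is at most $\mu _{n}$ and hence $a_{n}\mu _{n}^{2}/|\bar{y}|\leq a_{n}\mu _{n}\leq 1$. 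You instead compare $\hat{\theta}_{A}$ directly to the least-squares estimator $\bar{y}$ via the deterministic shrinkage bound $|\hat{\theta}_{A}-\bar{y}|\leq \mu _{n}$ (which correctly absorbs the thresholding region, since there $|\hat{\theta}_{A}-\bar{y}|=|\bar{y}|\leq \mu _{n}$), and then exploit that $\bar{y}-\theta \sim N(0,n^{-1})$ has a $\theta $-free law. Your argument is fully self-contained and arguably cleaner: the uniform $n^{1/2}$-consistency of $\bar{y}$ is trivial, whereas the paper's reference estimator $\hat{\theta}_{H}$ is itself only uniformly $a_{n}$-consistent and requires a citation. What the paper's decomposition buys is a conceptual link between the adaptive LASSO and hard thresholding that is reused elsewhere in the literature; what yours buys is elementarity and the explicit, quantitative bound $a_{n}|\hat{\theta}_{A}-\theta |\leq 1+|Z|$, from which both (\ref{unif_consist}) and (\ref{unif_consist_rate}) follow in one stroke. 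All the small verifications in your write-up (the identities $a_{n}\mu _{n}=\min (n^{1/2}\mu _{n},1)\leq 1$ and $a_{n}n^{-1/2}\leq 1$, the eventual positivity of $\varepsilon -\mu _{n}$, and the case distinction in the shrinkage bound) check out.
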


For the case where the estimator $\hat{\theta}_{A}$ is tuned to perform
conservative model selection, the preceding theorem shows that these
estimators are uniformly $n^{1/2}$-consistent. In contrast, in case the
estimators are tuned to perform consistent model selection, the theorem only
guarantees uniform $\mu _{n}^{-1}$-consistency; that the estimator does
actually not converge faster than $\mu _{n}$ in a uniform sense will be
shown in Section \ref{asydistribs}.

\begin{remark}
\label{ezero} \normalfont In case $n^{1/2}\mu _{n}\rightarrow \mathfrak{m}$
with $\mathfrak{m}=0$, the adaptive LASSO estimator is uniformly
asymptotically equivalent to the unrestricted maximum likelihood estimator $%
\bar{y}$ in the sense that $\sup_{\theta \in \mathbb{R}}\;P_{n,\theta
}(n^{1/2}|\hat{\theta}_{A}-\bar{y}|>\varepsilon )\rightarrow 0$ for $%
n\rightarrow \infty $ and for every $\varepsilon >0$. Using (\ref{thetaA})
this follows easily from 
\begin{eqnarray*}
P_{n,\theta }(n^{1/2}|\hat{\theta}_{A}-\bar{y}|>\varepsilon ) &\leq &%
\boldsymbol{1}(n^{1/2}\mu _{n}>\varepsilon )+P_{n,\theta }(n^{1/2}\mu
_{n}^{2}/|\bar{y}|>\varepsilon ,|\bar{y}|>\mu _{n}) \\
&\leq &2\cdot \boldsymbol{1}(n^{1/2}\mu _{n}>\varepsilon )\rightarrow 0.
\end{eqnarray*}
\end{remark}


\subsection{The distribution of the adaptive LASSO\label{Section3}}



\subsubsection{Finite-sample distributions \label{finite}}


We now derive the finite-sample distribution of $n^{1/2}(\hat{\theta}%
_{A}-\theta )$. For purpose of comparison we note the obvious fact that the
distribution of the unrestricted maximum likelihood estimator $\hat{\theta}%
_{U}=\bar{y}$ (corresponding to model $M_{U}$) as well as the distribution
of the restricted maximum likelihood estimator $\hat{\theta}_{R}\equiv 0$
(corresponding to model $M_{R})$ are normal. More precisely, $n^{1/2}(\hat{%
\theta}_{U}-\theta )$ is $N(0,1)$-distributed and $n^{1/2}(\hat{\theta}%
_{R}-\theta )$ is $N(-n^{1/2}\theta ,0)$-distributed, where the singular
normal distribution is to be interpreted as pointmass at $-n^{1/2}\theta $.
[The latter is simply an instance of the fact that in case $k>1$ the
restricted estimator has a singular normal distribution concentrated on the
subspace defined by the zero restrictions.]

The finite-sample distribution $F_{A,n,\theta }$ of $n^{1/2}(\hat{\theta}%
_{A}-\theta )$ is given by%
\begin{eqnarray*}
P_{n,\theta }(n^{1/2}(\hat{\theta}_{A}-\theta )\leq x) &=&P_{n,\theta
}(n^{1/2}(\hat{\theta}_{A}-\theta )\leq x,\hat{\theta}_{A}=0) \\
&+&P_{n,\theta }(n^{1/2}(\hat{\theta}_{A}-\theta )\leq x,\hat{\theta}_{A}>0)
\\
&+&P_{n,\theta }(n^{1/2}(\hat{\theta}_{A}-\theta )\leq x,\hat{\theta}_{A}<0)
\\
&=&A+B+C.
\end{eqnarray*}%
By (\ref{prob_MR}) we clearly have 
\begin{equation*}
A\;=\;\boldsymbol{1}(-n^{1/2}\theta \leq x)\;\left\{ \Phi (-n^{1/2}\theta
+n^{1/2}\mu _{n})-\Phi (-n^{1/2}\theta -n^{1/2}\mu _{n})\right\} .
\end{equation*}%
Furthermore, using expression (\ref{thetaA}) we find that 
\begin{eqnarray*}
B &=&P_{n,\theta }(n^{1/2}(\bar{y}-\mu _{n}^{2}/\bar{y}-\theta )\leq x,\bar{y%
}>\mu _{n}) \\
&=&P_{n,\theta }(n^{1/2}(\bar{y}^{2}-\mu _{n}^{2}-\theta \bar{y})\leq \bar{y}%
x,\bar{y}>\mu _{n}) \\
&=&\Pr (Z^{2}+n^{1/2}\theta Z-n\mu _{n}^{2}\leq Zx+n^{1/2}\theta
x,Z>-n^{1/2}\theta +n^{1/2}\mu _{n}) \\
&=&\Pr (Z^{2}+(n^{1/2}\theta -x)Z-(n\mu _{n}^{2}+n^{1/2}\theta x)\leq 0,\
Z>-n^{1/2}\theta +n^{1/2}\mu _{n}),
\end{eqnarray*}%
where $Z$ follows a standard normal distribution. The quadratic form in $Z$
is convex and hence is less than or equal to zero precisely between the
zeroes of the equation 
\begin{equation*}
z^{2}+(n^{1/2}\theta -x)z-(n\mu _{n}^{2}+n^{1/2}\theta x)=0.
\end{equation*}%
The solutions $z_{n,\theta }^{(1)}(x)$ and $z_{n,\theta }^{(2)}(x)$ of this
equation with $z_{n,\theta }^{(1)}(x)\leq z_{n,\theta }^{(2)}(x)$ are given
by%
\begin{equation}
-(n^{1/2}\theta -x)/2\pm \sqrt{((n^{1/2}\theta +x)/2)^{2}+n\mu _{n}^{2}}.
\label{zeros_equ}
\end{equation}%
Note that the expression under the root in (\ref{zeros_equ}) is always
positive, so that 
\begin{equation*}
B\;=\;P_{n,\theta }\left( z_{n,\theta }^{(1)}(x)\leq Z\leq z_{n,\theta
}^{(2)}(x),\;Z>-n^{1/2}\theta +n^{1/2}\mu _{n}\right) .
\end{equation*}%
Observe that $z_{n,\theta }^{(1)}(x)\leq -n^{1/2}\theta +n^{1/2}\mu _{n}$
always holds and that $-n^{1/2}\theta +n^{1/2}\mu _{n}\leq z_{n,\theta
}^{(2)}(x)$ is equivalent to $n^{1/2}\theta +x\geq 0$, so that we can write 
\begin{equation*}
B=\boldsymbol{1}(n^{1/2}\theta +x\geq 0)\;\left\{ \Phi \left( z_{n,\theta
}^{(2)}(x)\right) -\Phi (-n^{1/2}\theta +n^{1/2}\mu _{n})\right\} .
\end{equation*}%
The term $C$ can be treated in a similar fashion to arrive at 
\begin{equation*}
C\;=\;\boldsymbol{1}(n^{1/2}\theta +x\geq 0)\;\Phi (-n^{1/2}\theta
-n^{1/2}\mu _{n})+\boldsymbol{1}(n^{1/2}\theta +x<0)\;\Phi \left(
z_{n,\theta }^{(1)}(x)\right) .
\end{equation*}%
Adding up $A$, $B$ and $C$, we now obtain the finite-sample distribution
function of $n^{1/2}(\hat{\theta}_{A}-\theta )$ as%
\begin{equation}
F_{A,n,\theta }(x)=\boldsymbol{1}(n^{1/2}\theta +x\geq 0)\;\Phi \left(
z_{n,\theta }^{(2)}(x)\right) +\boldsymbol{1}(n^{1/2}\theta +x<0)\;\Phi
\left( z_{n,\theta }^{(1)}(x)\right) .  \label{fscdf}
\end{equation}%
It follows that the distribution of $n^{1/2}(\hat{\theta}_{A}-\theta )$
consists of an atomic part given by 
\begin{equation}
\left\{ \Phi (n^{1/2}(-\theta +\mu _{n}))-\Phi (n^{1/2}(-\theta -\mu
_{n}))\right\} \delta _{-n^{1/2}\theta },  \label{singpt}
\end{equation}%
where $\delta _{z}$ represents pointmass at the point $z$, and an absolutely
continuous part that has a Lebesgue density given by%
\begin{eqnarray}
&&0.5\times \left\{ \boldsymbol{1}(n^{1/2}\theta +x>0)\;\phi \left(
z_{n,\theta }^{(2)}(x)\right) \;(1+t_{n,\theta }(x))\;+\;\right.  \notag \\
&&\left. \boldsymbol{1}(n^{1/2}\theta +x<0)\;\phi \left( z_{n,\theta
}^{(1)}(x)\right) \;(1-t_{n,\theta }(x))\right\} ,  \label{abscontpt}
\end{eqnarray}%
where $t_{n,\theta }(x)=0.5(n^{1/2}\theta +x)/\left( ((n^{1/2}\theta
+x)/2)^{2}+n\mu _{n}^{2}\right) ^{1/2}$. Figure~1 illustrates the shape of
the finite-sample distribution of $n^{1/2}(\hat{\theta}_{A}-\theta )$.
Obviously, the distribution is highly non-normal.

\begin{figure}[ht]
\begin{center}
\includegraphics{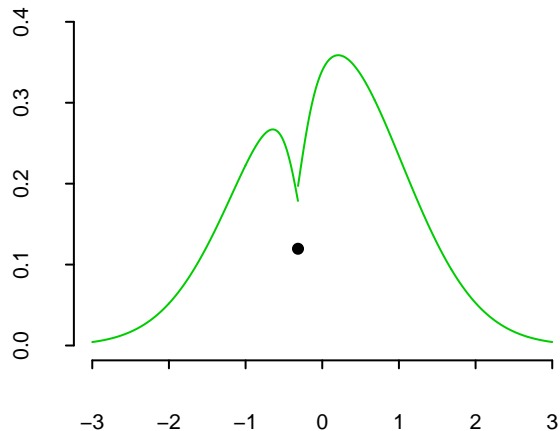}
\end{center}
\caption{Distribution of $n^{1/2}(\hat{\protect\theta}_{A}-\protect\theta )$
for $n=10$, $\protect\theta =0.1$, $\protect\mu_{n}=0.05$. The plot shows
the density of the absolutely continuous part (\protect\ref{abscontpt}), as
well as the total mass of the atomic part (\protect\ref{singpt}) located at $%
-n^{1/2}\protect\theta =-0.32$.}
\end{figure}


\subsubsection{Asymptotic distributions \label{asydistribs}}


We next obtain the asymptotic distributions of $\hat{\theta}_{A}$ under
general \textquotedblleft moving parameter\textquotedblright\ asymptotics
(i.e., asymptotics where the true parameter can depend on sample size),
since -- as already noted earlier -- considering only fixed-parameter
asymptotics may paint a very misleading picture of the behavior of the
estimator. In fact, the results given below amount to a complete description
of all possible accumulation points of the finite-sample distribution, cf.
Remarks \ref{r1}. Not surprisingly, the results in the conservative model
selection case are different from the ones in the consistent model selection
case.


\paragraph{Conservative case}


The large-sample behavior of the distribution $F_{A,n,\theta _{n}}$ of $%
n^{1/2}(\hat{\theta}_{A}-\theta _{n})$ for the case when the estimator is
tuned to perform conservative model selection is characterized in the
following theorem.

\begin{theorem}
\label{asymp_conserv} Assume $\mu _{n}\rightarrow 0$ and $n^{1/2}\mu
_{n}\rightarrow \mathfrak{m}$, $0\leq \mathfrak{m}<\infty $. Suppose the
true parameter $\theta _{n}\in \mathbb{R}$ satisfies $n^{1/2}\theta
_{n}\rightarrow \nu \in \mathbb{R}\cup \{-\infty ,\infty \}$. Then, for $\nu
\in \mathbb{R}$, $F_{A,n,\theta _{n}}$ converges weakly to the distribution%
\begin{multline*}
\boldsymbol{1}(x+\nu \geq 0)\Phi \left( -{\frac{\nu -x}{2}}+\sqrt{({\frac{%
\nu +x}{2}})^{2}+\mathfrak{m}^{2}}\right) \\
+\boldsymbol{1}(x+\nu <0)\Phi \left( -{\frac{\nu -x}{2}}-\sqrt{({\frac{\nu +x%
}{2}})^{2}+\mathfrak{m}^{2}}\right) .
\end{multline*}%
If $|\nu |=\infty $, then $F_{A,n,\theta _{n}}$ converges weakly to $\Phi $,
i.e., to a standard normal distribution.
\end{theorem}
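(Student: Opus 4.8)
The plan is to read the limit straight off the explicit finite-sample distribution function (\ref{fscdf}), since the derivation of that closed form has already done the substantive work. I would use throughout the standard fact that weak convergence of distribution functions is equivalent to pointwise convergence at every continuity point of the limiting distribution function, so it suffices to verify the latter; this also conveniently lets me ignore the single point where the limit has an atom.

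Treat first the case $\nu\in\mathbb{R}$. Writing $a_n=n^{1/2}\theta_n$ and using $n\mu_n^{2}=(n^{1/2}\mu_n)^{2}\to\mathfrak{m}^{2}$, I substitute $\theta=\theta_n$ into (\ref{zeros_equ}) and pass to the limit term by term: by continuity of the square root, for every fixed $x$,
\[
\znt\longrightarrow-\frac{\nu-x}{2}+\sqrt{\left(\frac{\nu+x}{2}\right)^{2}+\mathfrak{m}^{2}},\qquad
\zno\longrightarrow-\frac{\nu-x}{2}-\sqrt{\left(\frac{\nu+x}{2}\right)^{2}+\mathfrak{m}^{2}}.
\]
The only remaining issue is the behavior of the two indicators in (\ref{fscdf}): for $x>-\nu$ the quantity $a_n+x$ converges to $\nu+x>0$, so $\boldsymbol{1}(a_n+x\geq0)$ is eventually one, while for $x<-\nu$ the indicator $\boldsymbol{1}(a_n+x<0)$ is eventually one. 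Combining this with the displayed limits and the continuity of $\Phi$ gives $F_{A,n,\theta_n}(x)\to F(x)$ at every $x\neq-\nu$, where $F$ is the claimed limit.

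It then remains to observe that the single exceptional point $x=-\nu$ is a discontinuity point of $F$, so convergence there is not needed. Indeed, at $x=-\nu$ the upper branch equals $\Phi(-\nu+\mathfrak{m})$ while the left limit from the lower branch equals $\Phi(-\nu-\mathfrak{m})$, so $F$ has a jump of size $\Phi(-\nu+\mathfrak{m})-\Phi(-\nu-\mathfrak{m})$, which is exactly the atomic mass (\ref{singpt}) carried to the limit and is positive whenever $\mathfrak{m}>0$. (For $\mathfrak{m}=0$ there is no jump and the point causes no trouble.) This finishes the case $\nu\in\mathbb{R}$.

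For $|\nu|=\infty$ the task reduces to showing that the relevant zero tends to $x$. Take $\nu=+\infty$; then $\boldsymbol{1}(a_n+x\geq0)$ is eventually one, so $F_{A,n,\theta_n}(x)=\Phi(\znt)$ for large $n$, and the needed cancellation follows from
\[
\sqrt{\left(\frac{a_n+x}{2}\right)^{2}+n\mu_n^{2}}-\frac{a_n+x}{2}
=\frac{n\mu_n^{2}}{\sqrt{\left(\frac{a_n+x}{2}\right)^{2}+n\mu_n^{2}}+\frac{a_n+x}{2}}\longrightarrow0,
\]
since the numerator stays bounded (it tends to $\mathfrak{m}^{2}$) while the denominator diverges as $a_n\to+\infty$. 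Hence $\znt=-\tfrac{a_n-x}{2}+\tfrac{a_n+x}{2}+o(1)=x+o(1)$ and $F_{A,n,\theta_n}(x)\to\Phi(x)$. The case $\nu=-\infty$ is entirely symmetric, now with $\boldsymbol{1}(a_n+x<0)$ eventually one and $\zno\to x$. I expect this $|\nu|=\infty$ case to be the only part requiring genuine care, precisely because of the square-root cancellation above; the rest is direct substitution into (\ref{fscdf}).
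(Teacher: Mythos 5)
Your proposal is correct and follows essentially the same route as the paper: both read the limit directly off the explicit finite-sample cdf (\ref{fscdf}), obtaining convergence at every $x\neq-\nu$ from (\ref{zeros_equ}) when $\nu\in\mathbb{R}$ and identifying $x=-\nu$ as the (at most one) discontinuity point of the limit. The only cosmetic difference is in the case $|\nu|=\infty$, where the paper invokes Proposition~\ref{zeros_limit} (proved via a Taylor expansion of $\sqrt{1+z}$) to get $\znt\rightarrow x$, while you establish the same cancellation inline by rationalizing the square root; both computations are correct and equivalent for the purposes of this theorem.
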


The fixed-parameter asymptotic distribution can be obtained from Theorem~\ref%
{asymp_conserv} by setting $\theta _{n}\equiv \theta $: For $\theta =0$, we
get 
\begin{equation*}
\boldsymbol{1}(x\geq 0)\;\Phi (x/2+\sqrt{(x/2)^{2}+\mathfrak{m}^{2}})+%
\boldsymbol{1}(x<0)\;\Phi (x/2-\sqrt{(x/2)^{2}+\mathfrak{m}^{2}}),
\end{equation*}%
which coincides with the finite-sample distribution in (\ref{fscdf}) except
for replacing $n^{1/2}\mu _{n}$ with its limit $\mathfrak{m}$. However, for $%
\theta \neq 0$, the resulting fixed-parameter asymptotic distribution is a
standard normal distribution which clearly misrepresents the actual
distribution (\ref{fscdf}). This disagreement is most pronounced in the
statistically interesting case where $\theta $ is close to, but not equal
to, zero (e.g., $\theta \sim n^{-1/2}$). In contrast, the distribution given
in Theorem \ref{asymp_conserv} much better captures the behavior of the
finite-sample distribution also in this case because it coincides with the
finite-sample distribution (\ref{fscdf}) except for the fact that $%
n^{1/2}\mu _{n}$ and $n^{1/2}\theta _{n}$ have settled down to their
limiting values.


\paragraph{Consistent case}


In this subsection we consider the case where the tuning parameter $\mu _{n}$
is chosen so that $\hat{\theta}_{A}$ performs consistent model selection,
i.e. $\mu _{n}\rightarrow 0$ and $n^{1/2}\mu _{n}\rightarrow \infty $.

\begin{theorem}
\label{asymp_consist} Assume that $\mu _{n}\rightarrow 0$ and $n^{1/2}\mu
_{n}\rightarrow \infty $. Suppose the true parameter $\theta _{n}\in \mathbb{%
R}$ satisfies $\theta _{n}/\mu _{n}\rightarrow \zeta $ for some $\zeta \in 
\mathbb{R}\cup \{-\infty ,\infty \}$.

\begin{enumerate}
\item If $\zeta =0$ and $n^{1/2}\theta _{n}\rightarrow \nu \in \mathbb{R}$,
then $F_{A,n,\theta _{n}}$ converges weakly to the cdf $\boldsymbol{1}(\cdot
\geq -\nu )$.

\item The total mass $F_{A,n,\theta _{n}}$ escapes to either $\infty $ or $%
-\infty $ for the following cases: If $-\infty <\zeta <0$, or if $\zeta =0$
and $n^{1/2}\theta _{n}\rightarrow -\infty $, or if $\zeta =-\infty $ and $%
n^{1/2}\mu _{n}^{2}/\theta _{n}\rightarrow -\infty $, then $F_{A,n,\theta
_{n}}(x)\rightarrow 0$ for every $x\in \mathbb{R}$. If $0<\zeta <\infty $,
or if $\zeta =0$ and $n^{1/2}\theta _{n}\rightarrow \infty $, or if $\zeta
=\infty $ and $n^{1/2}\mu _{n}^{2}/\theta _{n}\rightarrow \infty $, then $%
F_{A,n,\theta _{n}}(x)\rightarrow 1$ for every $x\in \mathbb{R}$.

\item If $\left\vert \zeta \right\vert =\infty $ and $n^{1/2}\mu
_{n}^{2}/\theta _{n}\rightarrow r\in \mathbb{R}$, then $F_{A,n,\theta _{n}}$
converges weakly to the cdf $\Phi (\cdot +r)$.
\end{enumerate}
\end{theorem}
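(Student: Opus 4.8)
The plan is to work directly from the explicit finite-sample cdf (\ref{fscdf}), specialized to $\theta=\theta_n$, and to follow the behavior of the two roots $\zno$ and $\znt$ from (\ref{zeros_equ}) together with the indicator $\ind(n^{1/2}\theta_n+x\geq 0)$ that selects between them. Writing $s_n=n^{1/2}\theta_n$ and $m_n=n^{1/2}\mu_n$, the standing assumptions give $m_n\to\infty$ and $s_n/m_n=\theta_n/\mu_n\to\zeta$. The first observation is that, apart from part~1 (where $s_n\to\nu\in\mathbb{R}$), every configuration in parts~2 and~3 forces $|s_n|\to\infty$, with the sign of $s_n$ determined by that of $\zeta$ (or by the sign of $\lim n^{1/2}\theta_n$ when $\zeta=0$). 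Hence for each fixed $x$ the indicator $\ind(s_n+x\geq 0)$ is eventually constant, equal to $1$ when $s_n\to+\infty$ and to $0$ when $s_n\to-\infty$, so that $F_{A,n,\theta_n}(x)$ eventually coincides with $\Phi(\znt)$ or with $\Phi(\zno)$, respectively. It then remains only to compute the limit of the active root in each case.

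For part~1, with $s_n\to\nu$ finite and $m_n\to\infty$, the radical $\sqrt{((s_n+x)/2)^2+m_n^2}$ diverges to $+\infty$ while $-(s_n-x)/2$ stays bounded; hence $\znt\to+\infty$ and $\zno\to-\infty$. Splitting according to the sign of $x+\nu$ gives $F_{A,n,\theta_n}(x)\to 1$ for $x>-\nu$ and $\to 0$ for $x<-\nu$, which is exactly the cdf $\ind(\cdot\geq-\nu)$ of a pointmass at $-\nu$, the single discontinuity point $x=-\nu$ being irrelevant for weak convergence.

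For part~2 I would show that the active root tends to $-\infty$ (when the claimed limit of the cdf is $0$) or to $+\infty$ (when it is $1$), so that $\Phi$ of it tends to $0$ or $1$. The argument splits according to the order of $s_n$ relative to $m_n$. When $|\zeta|\in(0,\infty)$ one factors $m_n$ out of both $-(s_n-x)/2$ and the radical to obtain, say for $s_n\to-\infty$, $\zno=m_n\bigl(-\zeta/2-\sqrt{\zeta^2/4+1}+o(1)\bigr)$; since $\sqrt{\zeta^2/4+1}>|\zeta|/2$ the bracket has a strictly negative limit and $\zno\to-\infty$. When $\zeta=0$ but $s_n\to-\infty$, the radical equals $m_n(1+o(1))$ and dominates $-s_n/2=o(m_n)$, again giving $\zno\to-\infty$. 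When $\zeta=-\infty$ one has $|s_n|\gg m_n$, so the expansion below yields $\zno=x+m_n^2/(s_n+x)+o(1)$, and the hypothesis $n^{1/2}\mu_n^2/\theta_n=m_n^2/s_n\to-\infty$ forces $\zno\to-\infty$. The three configurations with reversed signs are handled identically and drive the active root $\znt\to+\infty$.

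Part~3 is the delicate one and is where the main work lies. Here $|\zeta|=\infty$, so $|s_n|\gg m_n$ and $m_n^2/s_n=n^{1/2}\mu_n^2/\theta_n\to r\in\mathbb{R}$. Taking $\zeta=+\infty$ for definiteness (so $s_n\to+\infty$ and the active root is $\znt$), I would use the second-order expansion
\[
\sqrt{\left(\frac{s_n+x}{2}\right)^2+m_n^2}=\frac{s_n+x}{2}\sqrt{1+\frac{4m_n^2}{(s_n+x)^2}}=\frac{s_n+x}{2}+\frac{m_n^2}{s_n+x}+o(1),
\]
valid because $m_n^2/(s_n+x)^2\to 0$. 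Substituting into (\ref{zeros_equ}), the leading terms $\pm(s_n\mp x)/2$ cancel, leaving $\znt=x+m_n^2/(s_n+x)+o(1)\to x+r$, whence $F_{A,n,\theta_n}(x)\to\Phi(x+r)$. The subcase $\zeta=-\infty$ is symmetric, using $|s_n+x|/2=-(s_n+x)/2$ in the expansion and the root $\zno$, and again yields $\zno\to x+r$. The one point requiring care is that the cross term indeed converges to $r$; this follows from $m_n^2/(s_n+x)=(m_n^2/s_n)(1+x/s_n)^{-1}$ together with $m_n^2/s_n\to r$ and $x/s_n\to 0$. Collecting the three parts completes the proof.
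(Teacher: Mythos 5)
Your proof is correct, and in substance it follows the same route as the paper: both arguments read off the limit of $F_{A,n,\theta_n}$ from the explicit formula (\ref{fscdf}) by tracking the indicator $\ind(n^{1/2}\theta_n+x\geq 0)$ and the active root from (\ref{zeros_equ}). The differences are organizational. First, for part~1 and for the $|\zeta|<1$ portion of part~2 the paper does not touch the roots at all: it invokes Proposition~\ref{selection_prob} to conclude that the atomic part (\ref{singpt}) carries mass tending to one and sits at $-n^{1/2}\theta_n$, which immediately yields pointmass at $-\nu$ (respectively, escape of all mass to $\pm\infty$); you obtain the same conclusions by showing $\znt\to+\infty$ and $\zno\to-\infty$ and letting the indicator select between them, which is equally valid and treats all cases more uniformly. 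Second, where the paper cites Proposition~\ref{zeros_limit} for the cases $|\zeta|=\infty$, you re-derive the expansion inline, and here there is one small imprecision worth fixing: the additive error in $\sqrt{((s_n+x)/2)^2+m_n^2}=|s_n+x|/2+m_n^2/|s_n+x|+E_n$ satisfies $|E_n|\leq m_n^4/|s_n+x|^3$, and the hypothesis $m_n^2/(s_n+x)^2\to 0$ alone only gives $E_n=o\bigl(m_n^2/|s_n+x|\bigr)$, not $E_n=o(1)$. In part~3 this is harmless, since $m_n^2/|s_n+x|\to|r|<\infty$ and hence $E_n\to 0$ as you claim; but in the $|\zeta|=\infty$ subcases of part~2, where $m_n^2/(s_n+x)\to\pm\infty$, the assertion that $\zno=x+m_n^2/(s_n+x)+o(1)$ need not hold literally. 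The conclusion nevertheless survives because the error is of smaller order than the divergent main term (this relative bound is exactly what the mean-value form of the Taylor remainder in the paper's proof of Proposition~\ref{zeros_limit} delivers), so you should state the relative rather than the additive error bound at that point.
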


The fixed-parameter asymptotic behavior of the adaptive LASSO estimator is
obtained from Theorem~\ref{asymp_consist} by setting $\theta _{n}\equiv
\theta $: For $\theta =0$, the asymptotic distribution reduces to point-mass
at $0$, which coincides with the asymptotic distribution of the restricted
maximum likelihood estimator. In the case of $\theta \neq 0$, the asymptotic
distribution is $\Phi (x+\rho /\theta )$ provided $n^{1/2}\mu
_{n}^{2}\rightarrow \rho $ (with the obvious interpretation if $\left\vert
\rho \right\vert =\infty $). That is, it is a \emph{shifted }version of the
asymptotic distribution of the unrestricted maximum likelihood estimator
(the shift being infinitely large if $\left\vert \rho \right\vert =\infty $%
). Observe that (for $\left\vert \rho \right\vert <\infty $) the shift gets
larger as $\left\vert \theta \right\vert $, $\left\vert \theta \right\vert
\neq 0$, gets smaller. The `oracle' property in the sense of \cite{zou06} is
hence satisfied if and only if $\rho =0$, that is, if the tuning parameter
additionally also satisfies $n^{1/4}\mu _{n}\rightarrow 0$. This is
precisely the condition imposed in Theorem~2 in \cite{zou06} which
establishes the `oracle' property. [Note that $n^{1/4}\mu _{n}\rightarrow 0$
translates into the assumption $\lambda _{n}/n^{1/2}\rightarrow 0$ in
Theorem~2 in \cite{zou06}.] If $n^{1/2}\mu _{n}^{2}\rightarrow \rho \neq 0$,
the adaptive LASSO estimator provides an example of an estimator that
performs consistent model selection, but does not satisfy the `oracle'
property in the sense that for $\theta \neq 0$ its asymptotic distribution
does not coincide with the asymptotic distribution of the unrestricted
maximum likelihood estimator.

In any case, the `oracle' property, which is guaranteed under the additional
requirement $n^{1/4}\mu _{n}\rightarrow 0$, carries little statistical
meaning: Imposing the additional condition $n^{1/4}\mu _{n}\rightarrow 0$
still allows all three cases in Theorem~\ref{asymp_consist} above to occur,
showing that -- notwithstanding the validity of the `oracle' property --
non-normal limiting distributions arise under a moving-parameter asymptotic
framework. These latter distributions are in better agreement with the
features exhibited by the finite-sample distribution (\ref{fscdf}), whereas
the `oracle' property always predicts a normal limiting distribution (a
singular one in case $\theta =0$), showing that it does not capture
essential features of the finite-sample distribution. In particular, the
preceding theorem shows that the estimator is not uniformly $n^{1/2}$%
-consistent as the sequence of finite-sample distributions of $n^{1/2}(\hat{%
\theta}_{A}-\theta _{n})$ is stochastically unbounded in some cases arising
in Theorem~\ref{asymp_consist}. All this goes to show that the `oracle'
property, which is based on the pointwise asymptotic distribution only,
paints a highly misleading picture of the behavior of the adaptive LASSO
estimator and should not be taken at face value. See also Remark \ref{r4}.

It transpires from Theorem \ref{asymp_consist} that $F_{A,n,\theta _{n}}$
converges weakly to the singular normal distribution $N(0,0)$ if $\theta
_{n}=0$ for all $n$, and to the standard normal $N(0,1)$ if $\theta _{n}$
satisfies $\left\vert \theta _{n}\right\vert /(n^{1/2}\mu
_{n}^{2})\rightarrow \infty $. Hence, if one, for example, allows as the
parameter space for $\theta $ only the set $\Theta _{n}=\left\{ \theta \in 
\mathbb{R}:\theta =0\text{ or }\left\vert \theta \right\vert >b_{n}\right\} $
where $b_{n}>0$ satisfies $b_{n}/(n^{1/2}\mu _{n}^{2})\rightarrow \infty $,
then the convergence of $F_{A,n,\theta _{n}}$ to the limiting distributions $%
N(0,0)$ and $N(0,1)$, respectively, is uniform over $\Theta _{n}$, i.e., the
`oracle' property holds uniformly over $\Theta _{n}$. Does this line of
reasoning restore the credibility of the `oracle' property? We do not think
so for the following reasons: The choice of $\Theta _{n}$ as the parameter
space is highly artificial, depends on sample size as well as on the tuning
parameter (and hence on the estimation procedure). Furthermore, in case $%
\Theta _{n}$ is adopted as the parameter space, the `forbidden' set $\mathbb{%
R-}\Theta _{n}$ will always have a diameter that is of order larger than $%
n^{-1/2}$; in fact, it will always contain elements $\theta _{n}\neq 0$ such
that $\theta _{n}$ would be correctly classified as non-zero with
probability converging to one by the adaptive LASSO procedure used, i.e., $%
P_{n,\theta _{n}}(\hat{\theta}_{A}\neq 0)\rightarrow 1$ (to see this note
that $\mathbb{R-}\Theta _{n}$ contains elements $\theta _{n}$ satisfying $%
\mathbb{\theta }_{n}/\mu _{n}\rightarrow \zeta $ with $\left\vert \zeta
\right\vert >1$ and use Proposition 1). This shows that adopting $\Theta _{n}
$ as the parameter space rules out values of $\theta $ that are
substantially different from zero, and not only values of $\theta $ that are
difficult to statistically distinguish from zero; consequently the
`forbidden' set is sizable. Summarizing, there appears to be little reason
why $\Theta _{n}$ would be a natural choice of parameter space, especially
in the context of model selection where interest naturally focusses on the
neighborhood of zero. We therefore believe that using $\Theta _{n}$ as the
parameter space is hardly supported by statistical reasoning but is more
reflective of a search for conditions that are favorable to the `oracle'
property.

As mentioned above, Theorem \ref{asymp_consist} shows, in particular, that $%
\hat{\theta}_{A}$ is not uniformly $n^{1/2}$-consistent. This prompts the
question of the behavior of the distribution of $c_{n}(\hat{\theta}%
_{A}-\theta _{n})$ under a sequence of norming constants $c_{n}$ that are $%
o(n^{1/2})$. Inspection of the proof of Theorem \ref{asymp_consist} reveals
that the stochastic unboundedness phenomenon persists if $c_{n}$ is $%
o(n^{1/2})$ but is of order larger than $\mu _{n}^{-1}$. For $c_{n}=O(\mu
_{n}^{-1})$, we always have stochastic boundedness by Theorem \ref%
{consist_ther}. Hence, the uniform convergence rate of $\hat{\theta}_{A}$ is
seen to be $\mu _{n}$ which is slower than $n^{-1/2}$. The precise limit
distributions of the estimator under the scaling $c_{n}\sim \mu _{n}^{-1}$
is obtained in the next theorem. [The case $c_{n}=o(\mu _{n}^{-1})$ is
trivial since then these limits are always pointmass at zero in view of
Theorem~\ref{consist_ther}.\footnote{%
There is no loss in generality here in the sense that the general case where 
$c_{n}=O(\mu _{n}^{-1})$ holds can -- by passing to subsequences -- always
be reduced to the cases where $c_{n}\sim \mu _{n}^{-1}$ or $c_{n}=o(\mu
_{n}^{-1})$ holds.}] A consequence of the next theorem is that with such a
scaling the \emph{pointwise }limiting distributions always degenerate to
pointmass at zero. This points to something of a dilemma with the adaptive
LASSO estimator when tuned to perform consistent model selection: If we
scale the estimator by $\mu _{n}^{-1}$, i.e., by the `right' uniform rate,
the pointwise limiting distributions degenerate to pointmass at zero. If we
scale the estimator by $n^{1/2}$, which is the `right' pointwise rate (at
least if $n^{1/4}\mu _{n}\rightarrow 0$), then we end up with stochastically
unbounded sequences of distributions under a moving parameter asymptotic
framework (for certain sequences $\theta _{n}$).

Let $G_{A,n,\theta }$ stand for the finite-sample distribution of $\mu
_{n}^{-1}(\hat{\theta}_{A}-\theta )$ under $P_{n,\theta }$. Clearly, $%
G_{A,n,\theta }(x)=F_{A,n,\theta }(n^{1/2}\mu _{n}x)$. The limits of this
distribution under `moving parameter' asymptotics are given in the
subsequent theorem. It turns out that the limiting distributions are always
pointmasses, however, not always located at zero.

\begin{theorem}
\label{asymp_rescaled} Assume that $\mu _{n}\rightarrow 0$, $n^{1/2}\mu
_{n}\rightarrow \infty $, and that $\theta _{n}/\mu _{n}\rightarrow \zeta $
for some $\zeta \in \mathbb{R}\cup \{-\infty ,\infty \}$.

\begin{enumerate}
\item If $|\zeta |<1$, then $G_{A,n,\theta _{n}}$ converges weakly to the
cdf $\boldsymbol{1}(\cdot \geq -\zeta )$.

\item If $1\leq |\zeta |<\infty $, then $G_{A,n,\theta _{n}}$ converges
weakly the cdf $\boldsymbol{1}(\cdot \geq -1/\zeta )$.

\item If $|\zeta |=\infty $, then $G_{A,n,\theta _{n}}$ converges weakly to
the cdf $\boldsymbol{1}(\cdot \geq 0)$.
\end{enumerate}
\end{theorem}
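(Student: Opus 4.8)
The plan is to work directly from the explicit representation (\ref{thetaA}) rather than from the finite-sample cdf (\ref{fscdf}), since (\ref{thetaA}) makes the almost-sure behaviour of $\mu_n^{-1}(\hat{\theta}_A-\theta_n)$ transparent. Under $P_{n,\theta_n}$ we may write $\bar{y}=\theta_n+n^{-1/2}Z$ with $Z\sim N(0,1)$, and by coupling all sample sizes through a single such $Z$ on a common probability space it suffices to show that $W_n:=\mu_n^{-1}(\hat{\theta}_A-\theta_n)$ converges almost surely to the appropriate constant; convergence in distribution to the corresponding point mass (whose cdf is the asserted $\boldsymbol{1}(\cdot\ge x^\ast)$) then follows immediately. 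The key preliminary observation is that
\[
\bar{y}/\mu_n=\theta_n/\mu_n+Z/(n^{1/2}\mu_n)\longrightarrow\zeta\qquad\text{a.s.,}
\]
because $\theta_n/\mu_n\to\zeta$ by hypothesis while $n^{1/2}\mu_n\to\infty$ forces the second term to vanish. Thus the indicator deciding which branch of (\ref{thetaA}) is active, namely $\boldsymbol{1}(|\bar{y}|>\mu_n)=\boldsymbol{1}(|\bar{y}/\mu_n|>1)$, is asymptotically governed by whether $|\zeta|$ lies below or above $1$.

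First I would dispose of the two generic regimes. If $|\zeta|<1$, then a.s.\ eventually $|\bar{y}|\le\mu_n$, so $\hat{\theta}_A=0$ and $W_n=-\theta_n/\mu_n\to-\zeta$, giving part~1. If $1<|\zeta|\le\infty$, then a.s.\ eventually $|\bar{y}|>\mu_n$, so $\hat{\theta}_A=\bar{y}-\mu_n^2/\bar{y}$ and
\[
W_n=\frac{\bar{y}-\theta_n}{\mu_n}-\frac{\mu_n}{\bar{y}}=\frac{Z}{n^{1/2}\mu_n}-\frac{1}{\bar{y}/\mu_n}\longrightarrow 0-\frac{1}{\zeta},
\]
where $1/\zeta$ is read as $0$ when $|\zeta|=\infty$. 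This yields the limit $-1/\zeta$ of part~2 (for $|\zeta|>1$) and the limit $0$ of part~3. The only thing to check is that $\bar{y}$ is eventually bounded away from $0$, which holds because $\bar{y}/\mu_n\to\zeta\neq0$, so the ratio $\mu_n/\bar{y}$ is well defined and converges to $1/\zeta$. (That this scaling produces a proper, rather than unbounded, limit is consistent with the uniform $\mu_n^{-1}$-consistency from Theorem~\ref{consist_ther}.)

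The main obstacle is the borderline value $|\zeta|=1$, which belongs to part~2 and where neither branch of (\ref{thetaA}) is eventually active almost surely, since $\bar{y}/\mu_n\to\pm1$ sits exactly on the selection threshold. The remedy is to verify that \emph{both} branches produce the same limit. On the event $\{|\bar{y}|\le\mu_n\}$ one has $W_n=-\theta_n/\mu_n\to-\zeta$, while on $\{|\bar{y}|>\mu_n\}$ the display above gives $W_n\to-1/\zeta$; since $|\zeta|=1$ implies $1/\zeta=\zeta$, these two values coincide and equal $-1/\zeta$, so $W_n\to-1/\zeta$ regardless of the (possibly oscillating) branch membership. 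This reconciliation is precisely what makes the jump location switch continuously from $-\zeta$ (for $|\zeta|<1$) to $-1/\zeta$ (for $|\zeta|\ge1$) at the threshold $|\zeta|=1$. An alternative route starts from (\ref{fscdf}) with $x$ replaced by $n^{1/2}\mu_n x$, after which $z_{n,\theta_n}^{(i)}(n^{1/2}\mu_n x)$ factors as $n^{1/2}\mu_n$ times a function of $(\theta_n/\mu_n,x)$ converging to some $g_{i}(\zeta,x)$, and one reads off the limit from $\Phi(n^{1/2}\mu_n\,g_{i})\to\boldsymbol{1}(g_{i}>0)$. That approach is equivalent but shifts the difficulty into a sign analysis of $g_2,g_1$, whose sign changes exactly along the hyperbola $\zeta x=-1$ and must be matched against the indicator boundary $x=-\zeta$; I find the almost-sure argument cleaner and would present it as the main proof.
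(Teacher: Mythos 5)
Your proof is correct, and it takes a genuinely different route from the one in the paper. The paper works with the explicit rescaled cdf: it writes $G_{A,n,\theta _{n}}(x)$ in the form (\ref{fscdf}) with $x$ replaced by $n^{1/2}\mu _{n}x$, introduces the roots $w_{n,\theta _{n}}^{(1)}(x)\leq w_{n,\theta _{n}}^{(2)}(x)$, and determines the pointwise limits of $\Phi (w_{n,\theta _{n}}^{(i)}(x))$ case by case (using Proposition~\ref{selection_prob} for part~1, and a separate monotonicity argument to handle the point $x=-\zeta $ when $\left\vert \zeta \right\vert >1$). You instead argue at the level of the estimator itself: representing $\bar{y}=\theta _{n}+n^{-1/2}Z$ on a common probability space, you show $\mu _{n}^{-1}(\hat{\theta}_{A}-\theta _{n})$ converges almost surely to the asserted constant, from which weak convergence to the point mass is immediate. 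The decisive observation that $\bar{y}/\mu _{n}\rightarrow \zeta $ a.s.\ (because $n^{1/2}\mu _{n}\rightarrow \infty $) cleanly identifies which branch of (\ref{thetaA}) is eventually active, and your treatment of the boundary case $\left\vert \zeta \right\vert =1$ --- checking that both branches yield the same limit $-\zeta =-1/\zeta $, so that possibly oscillating branch membership is harmless --- is exactly the point that needs care and you handle it correctly. What your approach buys is brevity and transparency: it bypasses the algebra of the roots $w_{n,\theta _{n}}^{(i)}$ and the sign analysis of their limits, and it makes clear why the jump location transitions continuously from $-\zeta $ to $-1/\zeta $ at $\left\vert \zeta \right\vert =1$. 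What the paper's computation buys is that the same machinery (the explicit cdf and its roots) is reused across Theorems~\ref{asymp_conserv}--\ref{asymp_rescaled}, including cases where the limits are nondegenerate and an almost-sure coupling argument of your type would not suffice on its own.
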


\subsubsection{Some Remarks}

\begin{remark}
\label{r1}\normalfont Theorems~\ref{asymp_conserv} and \ref{asymp_consist}
actually completely describe all accumulation points of the finite-sample
distribution of $n^{1/2}(\hat{\theta}_{A}-\theta _{n})$ without \emph{any}
condition on the sequence of parameters $\theta _{n}$. To see this, just
apply the theorems to subsequences and note that by compactness of $\mathbb{R%
}\cup \{-\infty ,\infty \}$ we can select from every subsequence a further
subsequence such that the relevant quantities like $n^{1/2}\theta _{n}$, $%
\theta _{n}/\mu _{n}$, and $n^{1/2}\mu _{n}^{2}/\theta _{n}$ converge in $%
\mathbb{R}\cup \{-\infty ,\infty \}$ along this further subsequence. A
similar comment also applies to Theorem \ref{asymp_rescaled}.
\end{remark}

\begin{remark}
\normalfont As a point of interest we note that the full complexity of the
possible limiting distributions in Theorems~\ref{asymp_conserv}, \ref%
{asymp_consist}, and \ref{asymp_rescaled} already arises if we restrict the
sequences $\theta _{n}$ to a bounded neighborhood of zero. Hence, the
phenomena described by the above theorems are of a local nature, and are not
tied in any way to the unboundedness of the parameter space.
\end{remark}

\begin{remark}
\normalfont In case the estimator is tuned to perform consistent model
selection, it is mainly the behavior of $\theta _{n}/\mu _{n}$ that governs
the form of the limiting distributions in Theorems \ref{asymp_consist} and %
\ref{asymp_rescaled}. Note that $\theta _{n}/\mu _{n}$ is of smaller order
than $n^{1/2}\theta _{n}$ because $n^{1/2}\mu _{n}\rightarrow \infty $ in
the consistent case. Hence, an analysis relying only on the classical local
asymptotics based on perturbations of $\theta $ of the order of $n^{-1/2}$
does not properly reveal all possible limits of the finite-sample
distributions in that case. [This is in contrast to the conservative case,
where classical local asymptotics reveal all possible limit distributions.]
\end{remark}

\begin{remark}
\label{r4}\normalfont The mathematical reason for the failure of the
pointwise asymptotic distributions to capture the behavior of the
finite-sample distributions well is that the convergence of the latter to
the former is not uniform in the underlying parameter $\theta \in \mathbb{R}$%
. See \cite{leepoe03,leepoe05} for more discussion in the context of
post-model-selection estimators.
\end{remark}

\begin{remark}
\normalfont The theoretical analysis has been restricted to the case of
orthogonal regressors. In the case of correlated regressors we can expect to
see similar phenomena (e.g., non-normality of finite-sample cdfs,
non-uniformity problems, etc.), although details will be different. Evidence
for this is provided by the simulation study presented in Section \ref%
{simulation}, by corresponding theoretical results for a class of
post-model-selection estimators (\cite{leepoe03,leepoe06a,leepoe08a}) in the
correlated regressor case as well as by general results on estimators
possessing the sparsity property (\cite{leepoe08b,poe07}).
\end{remark}


\subsection{Impossibility results for estimating the distribution of the
adaptive LASSO\label{imp}}


Since the cdf $F_{A,n,\theta }$ of $n^{1/2}(\hat{\theta}_{A}-\theta )$
depends on the unknown parameter, as shown in Section~\ref{finite}, one
might be interested in estimating this cdf. We show that this is an
intrinsically difficult estimation problem in the sense that the cdf cannot
be estimated in a uniformly consistent fashion. In the following, we provide
large-sample results that cover both consistent and conservative choices of
the tuning parameter, as well as finite-sample results that hold for any
choice of tuning parameter. For related results in different contexts see 
\cite{leepoe06a,leepoe06b,leepoe08b,poe06,poelee07}.

It is straightforward to construct consistent estimators for the
distribution $F_{A,n,\theta }$ of the (centered and scaled) estimator $\hat{%
\theta}_{A}$. One popular choice is to use subsampling or the $m$ out of $n$
bootstrap with $m/n\rightarrow 0$. Another possibility is to use the
pointwise large-sample limit distributions derived in Section~\ref%
{asydistribs} together with a properly chosen pre-test of the hypothesis $%
\theta =0$ versus $\theta \neq 0$. Because the pointwise large-sample limit
distribution takes only two different functional forms depending on whether $%
\theta =0$ or $\theta \neq 0$, one can perform a pre-test that rejects the
hypothesis $\theta =0$ in case $|\bar{y}|>n^{-1/4}$, say, and estimate the
finite-sample distribution by that large-sample limit formula that
corresponds to the outcome of the pre-test;\footnote{%
In the conservative case, the asymptotic distribution can also depend on $%
\mathfrak{m}$ which is then to be replaced by $n^{1/2}\mu _{n}$.} the test's
critical value $n^{-1/4}$ ensures that the correct large-sample limit
formula is selected with probability approaching one as sample size
increases. However, as we show next, any consistent estimator of the cdf $%
F_{A,n,\theta }$ is necessarily badly behaved in a worst-case sense.

\begin{theorem}
\label{imp1} Let $\mu _{n}$ be a sequence of tuning parameters such that $%
\mu _{n}\rightarrow 0$ and $n^{1/2}\mu _{n}\rightarrow \mathfrak{m}$ with $%
0\leq \mathfrak{m}\leq \infty $. Let $t\in \mathbb{R}$ be arbitrary. Then
every consistent estimator $\hat{F}_{n}(t)$ of $F_{A,n,\theta }(t)$
satisfies 
\begin{equation*}
\lim_{n\rightarrow \infty }\sup_{|\theta |<c/n^{1/2}}P_{n,\theta }\left(
\left\vert \hat{F}_{n}(t)-F_{A,n,\theta }(t)\right\vert \;>\;\varepsilon
\right) \;\;=\;\;1
\end{equation*}%
for each $\varepsilon <(\Phi (t+\mathfrak{m})-\Phi (t-\mathfrak{m}))/2$ and
each $c>\left\vert t\right\vert $. In particular, no uniformly consistent
estimator for $F_{A,n,\theta }(t)$ exists.
\end{theorem}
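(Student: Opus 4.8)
The plan is to establish the impossibility result by exhibiting two sequences of parameter values, both local to zero (hence inside the shrinking neighborhood $|\theta|<c/n^{1/2}$), along which the target cdf $F_{A,n,\theta}(t)$ converges to two \emph{different} limits, while the underlying probability measures $P_{n,\theta}$ become statistically indistinguishable. The key tool is a contiguity/testing argument: if the data-generating measures along the two sequences cannot be told apart asymptotically, then no estimator $\hat{F}_n(t)$ can simultaneously track both limiting values of $F_{A,n,\theta}(t)$, forcing a worst-case error that is at least half the gap between the two limits.

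\textbf{Step 1 (choosing the parameter sequences).} First I would fix the value $t$ and consider parameter sequences of the form $\theta_n = \nu/n^{1/2}$ for a well-chosen $\nu \in \mathbb{R}$ together with the comparison sequence $\theta_n = 0$. Under $\theta_n = 0$, Theorem~\ref{asymp_conserv} (applied with $\nu=0$) gives that $F_{A,n,0}(t)$ converges to $\boldsymbol{1}(t\geq 0)\Phi(t/2+\sqrt{(t/2)^2+\mathfrak{m}^2}) + \boldsymbol{1}(t<0)\Phi(t/2-\sqrt{(t/2)^2+\mathfrak{m}^2})$; a short computation shows the atom at the origin carries mass $\Phi(\mathfrak{m})-\Phi(-\mathfrak{m}) = \Phi(t+\mathfrak{m})-\Phi(t-\mathfrak{m})$ evaluated appropriately, which is where the quantity $(\Phi(t+\mathfrak{m})-\Phi(t-\mathfrak{m}))/2$ in the statement originates. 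The idea is that moving $\theta$ by an amount of order $n^{-1/2}$ shifts the location of the point mass (recall the atom of $F_{A,n,\theta}$ sits at $-n^{1/2}\theta$, which moves by order one under such perturbations), so that the value of the cdf at the \emph{fixed} point $t$ jumps by an amount comparable to the atom's mass.

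\textbf{Step 2 (indistinguishability of the measures).} Because $\theta_n$ and $0$ differ only by $O(n^{-1/2})$, the product measures $P_{n,\theta_n}$ and $P_{n,0}$ for the Gaussian location model are mutually contiguous (indeed the likelihood ratio is asymptotically log-normal by LeCam's first lemma). Hence for any estimator sequence $\hat{F}_n(t)$, an event that has probability bounded away from zero under one sequence also has probability bounded away from zero under the other. The standard two-point lower-bound argument then says: if $\hat{F}_n(t)$ is within $\varepsilon$ of $F_{A,n,0}(t)$ with nonnegligible probability, it must be at distance at least (gap $-\,\varepsilon$) from $F_{A,n,\theta_n}(t)$ with nonnegligible probability under $P_{n,\theta_n}$ — provided $\varepsilon$ is strictly smaller than half the asymptotic gap between the two limiting cdf values, which is exactly the constraint $\varepsilon < (\Phi(t+\mathfrak{m})-\Phi(t-\mathfrak{m}))/2$.

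\textbf{Step 3 (upgrading to the supremum-equals-one conclusion).} The delicate point — and what I expect to be the \textbf{main obstacle} — is sharpening the usual ``$\limsup \geq$ positive constant'' conclusion of the two-point method into the much stronger assertion that the supremum over the neighborhood converges to \emph{one}. The route is to exploit that $\hat{F}_n(t)$ is assumed \emph{consistent} in the pointwise sense, i.e. $\hat{F}_n(t) - F_{A,n,\theta}(t) \to 0$ in $P_{n,\theta}$-probability for each fixed $\theta$. Consistency at $\theta=0$ pins $\hat{F}_n(t)$ near $F_{A,n,0}(t)$ with probability tending to one under $P_{n,0}$; contiguity then transports a set of nearly full $P_{n,0}$-measure to a set of $P_{n,\theta_n}$-probability bounded below, but to reach probability \emph{one} in the supremum I would instead let the perturbation $\nu$ (equivalently $c$) grow and combine a whole family of local alternatives, or invoke the finer contiguity estimate showing that on the event where $\hat{F}_n(t)$ is close to $F_{A,n,0}(t)$ the measure $P_{n,\theta_n}$ places asymptotically full mass. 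The requirement $c>|t|$ guarantees enough room inside the neighborhood to place a sequence $\theta_n=\nu/n^{1/2}$ with $|\nu|$ large enough that the shifted atom crosses the evaluation point $t$, making the gap at $t$ attain its full size $\Phi(t+\mathfrak{m})-\Phi(t-\mathfrak{m})$. Once the supremum over $|\theta|<c/n^{1/2}$ is shown to force error exceeding $\varepsilon$ with probability arbitrarily close to one, taking limits yields the value $1$ and hence the stated impossibility; the nonexistence of any uniformly consistent estimator follows immediately as a corollary.
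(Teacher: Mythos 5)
Your proposal follows essentially the same route as the paper's proof, which likewise rests on the jump that the map $\theta \mapsto F_{A,n,\theta }(t)$ exhibits when the atom of the distribution (located at $-n^{1/2}\theta $) crosses the evaluation point $t$, combined with a contiguity-plus-consistency argument (the paper delegates that second part to the proof of Theorem 13 in \cite{poelee07}). Two points need tightening. First, the constant $(\Phi (t+\mathfrak{m})-\Phi (t-\mathfrak{m}))/2$ does not come from the atom at the origin: the identity $\Phi (\mathfrak{m})-\Phi (-\mathfrak{m})=\Phi (t+\mathfrak{m})-\Phi (t-\mathfrak{m})$ you write down is false for $t\neq 0$. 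It comes from the atom sitting exactly \emph{at} $t$, i.e., from the parameter value $-t/n^{1/2}$; the paper takes the two sequences $\theta _{n}(\pm \delta )=-(t\pm \delta )/n^{1/2}$ straddling this value and lets $\delta \downarrow 0$, so that the gap $\left\vert F_{A,n,\theta _{n}(-\delta )}(t)-F_{A,n,\theta _{n}(\delta )}(t)\right\vert $ tends to the full atom mass $\Phi (t+n^{1/2}\mu _{n})-\Phi (t-n^{1/2}\mu _{n})$, and the hypothesis $c>\left\vert t\right\vert $ is exactly what places both sequences inside the ball $|\theta |<c/n^{1/2}$. A single two-point comparison of $\theta =0$ against one sequence $\nu /n^{1/2}$ need not produce a gap as large as required for every $\varepsilon $ below half the atom mass; you need a \emph{pair} of alternatives on either side of the discontinuity, with the anchor value (here $F_{A,n,0}(t)$) then forced to miss at least one of the two by at least half the gap. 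Second, your Step 3 hesitation is unnecessary and the ``family of growing perturbations'' detour should be dropped: mutual contiguity of $P_{n,\theta _{n}(\pm \delta )}$ and $P_{n,0}$ (all drifts here are $O(n^{-1/2})$) transports events of asymptotically \emph{full} measure to events of asymptotically full measure, so consistency at $\theta =0$ pins $\hat{F}_{n}(t)$ to within $\eta $ of $F_{A,n,0}(t)$ with probability tending to one under \emph{both} alternatives; since one number cannot be within $\varepsilon +\eta $ of two values that are $2(\varepsilon +\eta )$ apart, the error exceeds $\varepsilon $ with probability tending to one under at least one of the two alternatives, which yields the limit $1$ and explains the factor $1/2$ in the admissible range for $\varepsilon $. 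With these two repairs your sketch is the paper's argument.
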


We stress that the above result also applies to any kind of bootstrap- or
subsampling-based estimator of the cdf $F_{A,n,\theta }$ whatsoever, since
the results in \cite{leepoe06b} on which the proof of Theorem~\ref{imp1}
rests apply to arbitrary randomized estimators, cf. Lemma 3.6 in \cite%
{leepoe06b}. The same applies to Theorems~\ref{imp2} and \ref{imp3} that
follow.

Loosely speaking, Theorem~\ref{imp1} states that any consistent estimator
for the cdf $F_{A,n,\theta }$ suffers from an unavoidable worst-case error
of at least $\varepsilon $ with $\varepsilon <(\Phi (t+\mathfrak{m})-\Phi (t-%
\mathfrak{m}))/2$. The error range, i.e., $(\Phi (t+\mathfrak{m})-\Phi (t-%
\mathfrak{m}))/2$, is governed by the limit $\mathfrak{m}=\lim_{n}n^{1/2}\mu
_{n}$. In case the estimator is tuned to be consistent, i.e., in case $%
\mathfrak{m}=\infty $, the error range equals $1/2$, and the phenomenon is
most pronounced. If the estimator is tuned to be conservative so that $%
\mathfrak{m}<\infty $, the error range is less than $1/2$ but can still be
substantial. Only in case $\mathfrak{m}=0$ the error range equals zero, and
the condition $\varepsilon <(\Phi (t+\mathfrak{m})-\Phi (t-\mathfrak{m}))/2$
in Theorem~\ref{imp1} leads to a trivial conclusion. This is, however, not
surprising as then the resulting estimator is uniformly asymptotically
equivalent to the unrestricted maximum likelihood estimator $\bar{y}$, cf.
Remark~\ref{ezero}.

A similar non-uniformity phenomenon as described in Theorem~\ref{imp1} for
consistent estimators $\hat{F}_{n}(t)$ also occurs for not necessarily
consistent estimators. For such arbitrary estimators we find in the
following that the phenomenon can be somewhat less pronounced, in the sense
that the lower bound is now $1/2$ instead of $1$, cf. (\ref{imp2rel2})
below. The following theorem gives a large-sample limit result that
parallels Theorem~\ref{imp1}, as well as a finite-sample result, both for
arbitrary (and not necessarily consistent) estimators of the cdf.

\begin{theorem}
\label{imp2}Let $0<\mu _{n}<\infty $ and let $t\in \mathbb{R}$ be arbitrary.
Then every estimator $\hat{F}_{n}(t)$ of $F_{A,n,\theta }(t)$ satisfies 
\begin{equation}
\sup_{|\theta |<c/n^{1/2}}P_{n,\theta }\left( \left\vert \hat{F}%
_{n}(t)-F_{A,n,\theta }(t)\right\vert \;>\;\varepsilon \right) \;\;\geq \;\;%
\frac{1}{2}  \label{imp2rel1}
\end{equation}%
for each $\varepsilon <(\Phi (t+n^{1/2}\mu _{n})-\Phi (t-n^{1/2}\mu _{n}))/2$%
, for each $c>|t|$, and for each fixed sample size $n$. If $\mu _{n}$
satisfies $\mu _{n}\rightarrow 0$ and $n^{1/2}\mu _{n}\rightarrow \mathfrak{m%
}$ as $n\rightarrow \infty $ with $0\leq \mathfrak{m}\leq \infty $, we thus
have 
\begin{equation}
\liminf_{n\rightarrow \infty }\inf_{\hat{F}_{n}(t)}\sup_{|\theta
|<c/n^{1/2}}P_{n,\theta }\left( \left\vert \hat{F}_{n}(t)-F_{A,n,\theta
}(t)\right\vert \;>\;\varepsilon \right) \;\;\geq \;\;\frac{1}{2}
\label{imp2rel2}
\end{equation}%
for each $\varepsilon <(\Phi (t+\mathfrak{m})-\Phi (t-\mathfrak{m}))/2$ and
for each $c>|t|$, where the infimum in (\ref{imp2rel2}) extends over all
estimators $\hat{F}_{n}(t)$.
\end{theorem}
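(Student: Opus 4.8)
The plan is to prove the finite-sample bound (\ref{imp2rel1}) by reducing it to a standard two-point ``testing'' argument and then to obtain the asymptotic statement (\ref{imp2rel2}) by passing to $\liminf_n$. The abstract device is exactly the type encapsulated in Lemma~3.6 of \cite{leepoe06b}: for any (possibly randomized) estimator $\hat{F}_n(t)$ and any two parameters $\theta_1,\theta_2$ with $|F_{A,n,\theta_1}(t)-F_{A,n,\theta_2}(t)|>2\varepsilon$, the two ``success'' events $\{|\hat{F}_n(t)-F_{A,n,\theta_i}(t)|\le\varepsilon\}$, $i=1,2$, are disjoint, so the corresponding ``miss'' indicators sum to at least one at every sample point. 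Integrating against a common dominating measure and using $\int\min(p_1,p_2)\,d\lambda=1-\|P_{n,\theta_1}-P_{n,\theta_2}\|_{TV}$ then yields $\max_{i=1,2}P_{n,\theta_i}(|\hat{F}_n(t)-F_{A,n,\theta_i}(t)|>\varepsilon)\ge\tfrac12(1-\|P_{n,\theta_1}-P_{n,\theta_2}\|_{TV})$. The factoring through a common dominating measure is precisely what makes the bound valid for arbitrary randomized estimators, so the crux reduces to exhibiting two parameter values, both inside the open ball $|\theta|<c/n^{1/2}$ and arbitrarily close to one another, whose target values $F_{A,n,\theta}(t)$ differ by more than $2\varepsilon$.

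For the construction I would take $\theta^{*}=-t/n^{1/2}$, the value at which the atom of $F_{A,n,\theta}$ (located at $-n^{1/2}\theta$ with mass $\Phi(t+n^{1/2}\mu_n)-\Phi(t-n^{1/2}\mu_n)$, cf.\ (\ref{singpt})) sits exactly at $t$; note $|\theta^{*}|=|t|/n^{1/2}<c/n^{1/2}$ precisely because $c>|t|$. Reading off (\ref{fscdf}), the map $\theta\mapsto F_{A,n,\theta}(t)$ has a jump at $\theta^{*}$: at $\theta=\theta^{*}$ one has $n^{1/2}\theta^{*}+t=0$, so the indicator selects $z_{n,\theta^{*}}^{(2)}(t)$ and $F_{A,n,\theta^{*}}(t)=\Phi(z_{n,\theta^{*}}^{(2)}(t))=\Phi(t+n^{1/2}\mu_n)=:a$, whereas for $\theta<\theta^{*}$ the indicator selects $z_{n,\theta}^{(1)}(t)$ and letting $\theta\uparrow\theta^{*}$ gives $F_{A,n,\theta}(t)\to\Phi(t-n^{1/2}\mu_n)=:b$. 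Thus value and left limit differ by exactly the atom mass $a-b=\Phi(t+n^{1/2}\mu_n)-\Phi(t-n^{1/2}\mu_n)$, which by hypothesis on $\varepsilon$ strictly exceeds $2\varepsilon$.

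To finish (\ref{imp2rel1}) I would apply the two-point bound with $\theta_1=\theta^{*}$ and $\theta_2=\theta^{*}-\eta$ for small $\eta>0$; both lie in the open ball, and since $F_{A,n,\theta_2}(t)\to b$ as $\eta\downarrow0$ while $F_{A,n,\theta_1}(t)=a$, the separation stays above $2\varepsilon$ for $\eta$ small. The bound gives that the supremum over the ball is at least $\tfrac12(1-\|P_{n,\theta^{*}}-P_{n,\theta^{*}-\eta}\|_{TV})$; since the Gaussian location family is continuous in total variation, letting $\eta\downarrow0$ drives the total-variation term to $0$ and yields the value $\tfrac12$. The asymptotic statement (\ref{imp2rel2}) then follows by $\liminf_n$: because $n^{1/2}\mu_n\to\mathfrak{m}$, the finite-sample error range $(\Phi(t+n^{1/2}\mu_n)-\Phi(t-n^{1/2}\mu_n))/2$ converges to $(\Phi(t+\mathfrak{m})-\Phi(t-\mathfrak{m}))/2$, so any fixed $\varepsilon$ below the latter satisfies the finite-sample hypothesis for all large $n$, and (\ref{imp2rel1}), being valid for \emph{every} estimator and hence for the infimum over estimators, delivers the bound $\tfrac12$ for all such $n$. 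The only step requiring genuine care is the jump computation from (\ref{fscdf}) — correctly identifying which indicator is active and the limiting behavior of $z_{n,\theta}^{(1)}(t)$ and $z_{n,\theta}^{(2)}(t)$ as $\theta\uparrow\theta^{*}$; everything else is the routine two-point machinery together with total-variation continuity of the normal location model.
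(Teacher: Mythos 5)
Your proposal is correct and follows essentially the same route as the paper: the separation you compute at $\theta^{*}=-t/n^{1/2}$ is exactly the paper's display (\ref{z}), and the two-point total-variation argument you invoke is precisely the machinery of Theorem~14 in \cite{poelee07} and Lemma~3.6 in \cite{leepoe06b} to which the paper's proof defers. You have merely unpacked what the paper leaves as a citation, and all the details (membership of both points in the open ball since $c>|t|$, persistence of the $2\varepsilon$-separation for small $\eta$, and the passage to the $\liminf$ via convergence of the error range) check out.
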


The finite-sample statement in Theorem~\ref{imp2} clearly reveals how the
estimability of the cdf of the estimator depends on the tuning parameter $%
\mu _{n}$: A larger value of $\mu _{n}$, which results in a `more sparse'
estimator in view of (\ref{prob_MR}), directly corresponds to a larger range 
$(\Phi (t+n^{1/2}\mu _{n})-\Phi (t-n^{1/2}\mu _{n}))/2$ for the error $%
\varepsilon $ within which any estimator $\hat{F}_{n}(t)$ performs poorly in
the sense of (\ref{imp2rel1}). In large samples, the limit $\mathfrak{m}%
=\lim_{n\rightarrow \infty }n^{1/2}\mu _{n}$ takes the role of $n^{1/2}\mu
_{n}$.

An impossibility result paralleling Theorem~\ref{imp2} for the cdf $%
G_{A,n,\theta }(t)$ of $\mu _{n}^{-1}(\hat{\theta}_{A}-\theta )$ is given
next.

\begin{theorem}
\label{imp3}Let $0<\mu _{n}<\infty $ and let $t\in \mathbb{R}$ be arbitrary.
Then every estimator $\hat{G}_{n}(t)$ of $G_{A,n,\theta }(t)$ satisfies 
\begin{equation}
\sup_{|\theta |<c\mu _{n}}P_{n,\theta }\left( \left\vert \hat{G}%
_{n}(t)-G_{A,n,\theta }(t)\right\vert \;>\;\varepsilon \right) \quad \geq
\quad \frac{1}{2}  \label{imp3rel1}
\end{equation}%
for each $\varepsilon <(\Phi (n^{1/2}\mu _{n}(t+1))-\Phi (n^{1/2}\mu
_{n}(t-1)))/2$, for each $c>|t|$, and for each fixed sample size $n$. If $%
\mu _{n}$ satisfies $\mu _{n}\rightarrow 0$ and $n^{1/2}\mu _{n}\rightarrow
\infty $ as $n\rightarrow \infty $, we thus have for each $c>|t|$ 
\begin{equation}
\liminf_{n\rightarrow \infty }\inf_{\hat{G}_{n}(t)}\sup_{|\theta |<c\mu
_{n}}P_{n,\theta }\left( \left\vert \hat{G}_{n}(t)-G_{A,n,\theta
}(t)\right\vert \;>\;\varepsilon \right) \quad \geq \quad \frac{1}{2}
\label{imp3rel2}
\end{equation}%
for each $\varepsilon <1/2$ if $\left\vert t\right\vert <1$ and for each $%
\varepsilon <1/4$ if $\left\vert t\right\vert =1$, where the infimum in (\ref%
{imp3rel2}) extends over all estimators $\hat{G}_{n}(t)$.
\end{theorem}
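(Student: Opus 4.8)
The plan is to mirror the proof of Theorem~\ref{imp2}, replacing the cdf $F_{A,n,\theta}$ by $G_{A,n,\theta}$ and exploiting a jump discontinuity of the map $\theta\mapsto G_{A,n,\theta}(t)$. Everything rests on a two-point (reduction-to-testing) bound of the type provided by Lemma~3.6 in \cite{leepoe06b}: for any two parameter values $\theta_1,\theta_2$ and any (possibly randomized) estimator $\hat{G}_n(t)$, whenever $|G_{A,n,\theta_1}(t)-G_{A,n,\theta_2}(t)|>2\varepsilon$ the events $\{|\hat{G}_n(t)-G_{A,n,\theta_i}(t)|\le\varepsilon\}$, $i=1,2$, are disjoint, so that
$$\max_{i\in\{1,2\}}P_{n,\theta_i}\bigl(|\hat{G}_n(t)-G_{A,n,\theta_i}(t)|>\varepsilon\bigr)\;\ge\;\tfrac12\bigl(1-\|P_{n,\theta_1}-P_{n,\theta_2}\|_{\mathrm{TV}}\bigr).$$
Since $P_{n,\theta}$ is the distribution of $\bar y\sim N(\theta,n^{-1})$, the total-variation distance on the right tends to $0$ as $\theta_1,\theta_2$ approach a common limit.

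First I would locate the jump. Using $G_{A,n,\theta}(t)=F_{A,n,\theta}(n^{1/2}\mu_n t)$ together with the explicit formula (\ref{fscdf}), the indicator in (\ref{fscdf}) switches precisely at $\theta=-\mu_n t$. Evaluating the roots (\ref{zeros_equ}) at $x=n^{1/2}\mu_n t$ and $\theta=-\mu_n t$ gives $z_{n,\theta}^{(2)}(x)=n^{1/2}\mu_n(t+1)$ and $z_{n,\theta}^{(1)}(x)=n^{1/2}\mu_n(t-1)$. Since these roots are continuous in $\theta$, the left- and right-hand limits of $G_{A,n,\cdot}(t)$ at $-\mu_n t$ equal $\Phi(n^{1/2}\mu_n(t-1))$ and $\Phi(n^{1/2}\mu_n(t+1))$, respectively, so that $\theta\mapsto G_{A,n,\theta}(t)$ has a jump at $\theta=-\mu_n t$ of size exactly
$$J_n:=\Phi\bigl(n^{1/2}\mu_n(t+1)\bigr)-\Phi\bigl(n^{1/2}\mu_n(t-1)\bigr).$$
This is nothing but the mass (\ref{singpt}) of the atom of $\mu_n^{-1}(\hat\theta_A-\theta)$ --- located at $-\theta/\mu_n$ --- as that atom passes through the point $t$. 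Because $c>|t|$, the point $-\mu_n t$ lies in the interior of $(-c\mu_n,c\mu_n)$.

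To prove the finite-sample bound (\ref{imp3rel1}) I would fix $n$, $t$, $c>|t|$ and $\varepsilon<J_n/2$, and take $\theta_1<-\mu_n t<\theta_2$ inside $(-c\mu_n,c\mu_n)$. Since the left and right limits of $G_{A,n,\cdot}(t)$ at $-\mu_n t$ differ by $J_n>2\varepsilon$, for $\theta_1,\theta_2$ close enough to $-\mu_n t$ one has both $|G_{A,n,\theta_1}(t)-G_{A,n,\theta_2}(t)|>2\varepsilon$ and $\|P_{n,\theta_1}-P_{n,\theta_2}\|_{\mathrm{TV}}$ arbitrarily small. Combining this with the displayed two-point inequality, and noting that $\sup_{|\theta|<c\mu_n}P_{n,\theta}(\cdot)$ dominates the maximum over $\{\theta_1,\theta_2\}$, letting the separation shrink to zero yields (\ref{imp3rel1}).

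Finally, the large-sample statement (\ref{imp3rel2}) follows by passing to the limit in (\ref{imp3rel1}) after computing $\lim_n J_n$ under $n^{1/2}\mu_n\to\infty$: for $|t|<1$ the two arguments of $\Phi$ diverge to $+\infty$ and $-\infty$, so $J_n\to1$ and $J_n/2\to1/2$; for $|t|=1$ one argument is identically $0$ (giving $\Phi(0)=1/2$) while the other diverges, so $J_n\to1/2$ and $J_n/2\to1/4$. Thus for each $\varepsilon<1/2$ (resp.\ $\varepsilon<1/4$) the condition $\varepsilon<J_n/2$ holds for all large $n$, whence $\inf_{\hat{G}_n(t)}\sup_{|\theta|<c\mu_n}P_{n,\theta}(\cdot)\ge\tfrac12$ eventually and the $\liminf$ is at least $1/2$. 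I expect the only delicate point to be the bookkeeping that the jump $J_n$ is attained by admissible perturbations with vanishing total-variation distance; once the jump is correctly identified via (\ref{fscdf}) and (\ref{zeros_equ}), the remainder is the standard testing argument already used for Theorem~\ref{imp2}.
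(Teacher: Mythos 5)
Your proposal is correct and follows essentially the same route as the paper, which simply defers to the analogous two-point testing argument of Theorem 18 in \cite{poelee07}: locate the jump of $\theta\mapsto G_{A,n,\theta}(t)$ at $\theta=-\mu_n t$ (of size $\Phi(n^{1/2}\mu_n(t+1))-\Phi(n^{1/2}\mu_n(t-1))$, which lies inside $(-c\mu_n,c\mu_n)$ since $c>|t|$) and combine it with the total-variation bound for arbitrary randomized estimators from Lemma 3.6 of \cite{leepoe06b}. Your computation of the jump via (\ref{fscdf}) and (\ref{zeros_equ}) and of its limits for $|t|<1$ and $|t|=1$ correctly reproduces the thresholds $1/2$ and $1/4$ in the statement, so the argument is complete.
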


This result shows, in particular, that no uniformly consistent estimator
exists for $G_{A,n,\theta }(t)$ in case $\left\vert t\right\vert \leq 1$
(not even over compact subsets of $\mathbb{R}$ containing the origin). In
view of Theorem~\ref{asymp_rescaled}, we see that for $t>1$ we have $%
\sup_{\theta \in \mathbb{R}}\left\vert G_{A,n,\theta }(t)-1\right\vert
\rightarrow 0$ as $n\rightarrow \infty $, hence $\hat{G}_{n}(t)=1$ is
trivially a uniformly consistent estimator in this case. Similarly, for $%
t<-1 $ we have $\sup_{\theta \in \mathbb{R}}\left\vert G_{A,n,\theta
}(t)\right\vert \rightarrow 0$ as $n\rightarrow \infty $, hence $\hat{G}%
_{n}(t)=0$ is trivially a uniformly consistent estimator in this case.


\section{Some Monte Carlo Results\label{simulation}}


We provide simulation results for the finite-sample distribution of the
adaptive LASSO estimator in the case of \emph{non-orthogonal }regressors to
complement our theoretical findings for the orthogonal case. We present our
results by showing the marginal distribution for each component of the
centered and scaled estimator. Not surprisingly, the graphs exhibit the same
highly non-normal features of the corresponding finite-sample distribution
of the estimator derived in Section \ref{Section3} for the case of
orthogonal regressors.

The simulations were carried out the following way. We consider 1000
repetitions of $n$ simulated data points from the model (\ref{model}) with $%
\sigma ^{2}=1$ and $X$ such that $X^{\prime }X=n\Omega $ with $\Omega
_{ij}=0.5^{|i-j|}$ for $i,j=1,\dots ,k$. More concretely, $X$ was
partitioned into $d=n/k$ blocks of size $k\times k$ (where $d$ is assumed to
be integer) and each of these blocks was set equal to $k^{1/2}L$, with $%
LL^{\prime }=\Omega $, the Cholesky factorization of $\Omega $. We used $k=4$
regressors and various values of the true parameter $\theta $ given by $%
\theta =(3,1.5,\gamma n^{-1/2},\gamma n^{-1/2})^{\prime }$ where $\gamma
=0,1,2$. This model with $\theta =(3,1.5,0,0)^{\prime }$ (i.e., $\gamma =0$)
is a downsized version of a model considered in Monte Carlo studies in \cite%
{tib96}, \cite{fanli01}, and \cite{zou06}. For apparent reasons it is of
interest to investigate the performance of the estimator not only at a
single parameter value, but also at other (neighboring) points in the
parameter space. The cases with $\gamma \neq 0$, represent the statistically
interesting case where some components of the true parameter value are close
to but not equal to zero.

For each simulation, we computed the adaptive LASSO estimator $\hat{\theta}%
_{A}$ using the LARS package of \cite{efretal04} in R. Each component of the
estimator was centered and scaled, i.e., $C_{jj}^{-1/2}(\hat{\theta}%
_{A,j}-\theta _{j})$ was computed, where $C=(n\Omega )^{-1}$. The tuning
parameter $\mu _{n}$ was chosen in two different ways. In the first case, it
was set to the fixed value of $\mu _{n}=n^{-1/3}$, a choice that corresponds
to consistent model selection and additionally satisfies the condition $%
n^{1/4}\mu _{n}\rightarrow 0$ required in \cite{zou06} to obtain the
'oracle' property. In the second case, in each simulation the tuning
parameter was selected to minimize a mean-squared prediction error obtained
through $K$-fold cross-validation (which can be computed using the LARS
package, in our case with $K=10$).

The results for both choices of the tuning parameters, for $n=100$, and $%
\gamma =0,1,2$ are shown in Figures 2-7 below. For each component of the
estimator, the discrete component of the distribution corresponding to the
zero values of the $j$-th component of the estimator $\hat{\theta}_{A,j}$
(appearing at $-C_{jj}^{-1/2}\theta _{j}$ for the centered and scaled
estimator) is represented by a dot drawn at the height of the corresponding
relative frequency. The histogram formed from the remaining values of $%
C_{jj}^{-1/2}(\hat{\theta}_{A,j}-\theta _{j})$ was then smoothed by the
kernel smoother available in R, resulting in the curves representing the
density of the absolutely continuous part of the finite-sample distribution
of $C_{jj}^{-1/2}(\hat{\theta}_{A,j}-\theta _{j})$. Naturally, in these
plots the density was rescaled by the appropriate relative frequency of the
estimator not being equal to zero.

We first discuss the case where the tuning parameter is set at the fixed
value $\mu _{n}=n^{-1/3}$. For $\gamma =0$, i.e., the case where the last
two components of the true parameter are identically zero, Figure 2 shows
that the adaptive LASSO estimator finds the zero components in $\theta
=(3,1.5,0,0)^{\prime }$ with probability close to one (i.e., the
distributions of $C_{jj}^{-1/2}(\hat{\theta}_{A,j}-\theta _{j})$, $j=3,4$,
practically coincide with pointmass at $0$). Furthermore, the distributions
of the first two components seem to somewhat resemble normality. The outcome
in this case is hence roughly in line with what the 'oracle' property
predicts. This is due to the fact that the components of $\theta $ are
either zero or large (note that $C_{jj}^{-1/2}\theta _{j}$ is approximately
equal to $26$ and $12$, respectively, for $j=1,2$). The results are quite
different for the cases $\gamma =1$ and $\gamma =2$ (Figures 3 and 4), which
represent the case where some of the components of the parameter vector $%
\theta $ are large and some are different from zero but small (note that $%
C_{33}^{-1/2}\theta _{3}\approx 0.77\gamma $ and $C_{44}^{-1/2}\theta
_{4}\approx 0.87\gamma $). In both cases the distributions of $C_{jj}^{-1/2}(%
\hat{\theta}_{A,j}-\theta _{j})$, $j=3,4$, are a mixture of an atomic part
and an absolutely continuous part, both shifted to the left of the origin.
Furthermore, the absolutely continuous part appears to be highly non-normal.
This is perfectly in line with the theoretical results obtained in Section %
\ref{Section3}. It once again demonstrates that the 'oracle' property gives
a misleading impression of the actual performance of the estimator.

\begin{figure}[tbp]
\begin{center}
\includegraphics[angle=-90, width=0.9\textwidth]{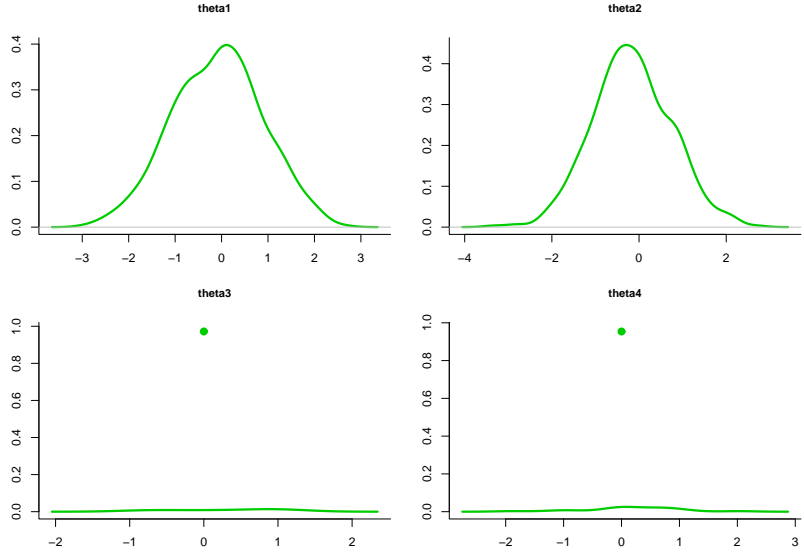}
\end{center}
\caption{Marginal distributions of the scaled and centered adaptive LASSO
estimator for $n=100$, $\protect\gamma =0$, i.e., $\protect\theta %
=(3,1.5,0,0)^{\prime }$, and $\protect\mu_{n}=n^{-1/3}=0.22$.}
\end{figure}

\begin{figure}[tbp]
\begin{center}
\includegraphics[angle=-90, width=0.9\textwidth]{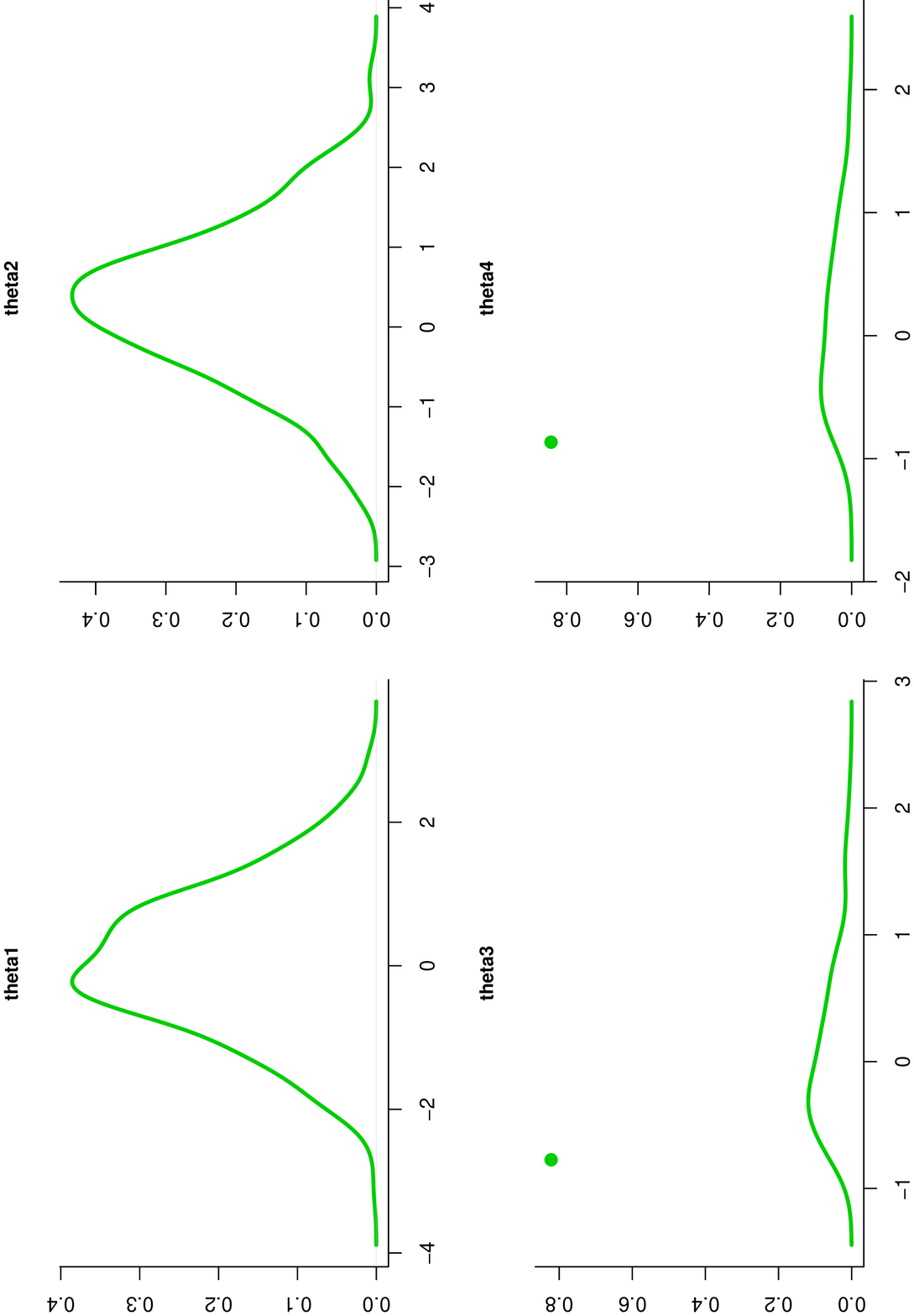}
\end{center}
\caption{Marginal distributions of the scaled and centered adaptive LASSO
estimator for $n=100$, $\protect\gamma =1$, i.e., $\protect\theta %
=(3,1.5,0.1,0.1)^{\prime }$, and $\protect\mu_{n}=n^{-1/3}=0.22$.}
\end{figure}

\begin{figure}[tbp]
\begin{center}
\includegraphics[angle=-90, width=0.9\textwidth]{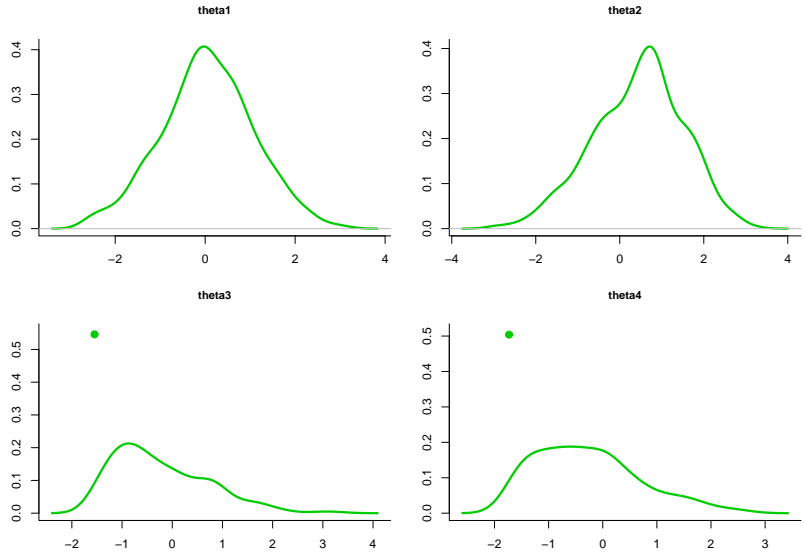}
\end{center}
\caption{Marginal distributions of the scaled and centered adaptive LASSO
estimator for $n=100$, $\protect\gamma =2$, i.e., $\protect\theta %
=(3,1.5,0.2,0.2)^{\prime }$, and $\protect\mu_{n}=n^{-1/3}=0.22$.}
\end{figure}

In the case where the tuning parameter is chosen by cross-validation, a
similar picture emerges, except for the fact that in case $\gamma =0$ the
adaptive LASSO estimator now finds the zero component less frequently, cf.
Figure 5. [In fact, the probability of finding a zero value of $\hat{\theta}%
_{A,j}$ for $j=3,4$ is smaller in the cross-validated case regardless of the
value of $\gamma $ considered.] The reason for this is that the tuning
parameters obtained through cross-validation were typically found to be
smaller than $n^{-1/3}$, resulting in an estimator $\hat{\theta}_{A}$ that
acts more like a conservative rather than a consistent model selection
procedure. [This is in line with theoretical results in \cite{leliwa06}, see
also \cite{leepoe08a}.] In agreement with the theoretical results in Section %
\ref{Section3}, the absolutely continuous components of the distributions of 
$C_{jj}^{-1/2}(\hat{\theta}_{A,j}-\theta _{j})$ are now typically highly
non-normal, especially for $j=3,4$, cf. Figures 5-7. [Note that
cross-validation leads to a data-depending tuning parameter $\mu _{n}$, \ a
situation that is strictly speaking not covered by the theoretical results.]

\begin{figure}[tbp]
\begin{center}
\includegraphics[angle=-90, width=0.9\textwidth]{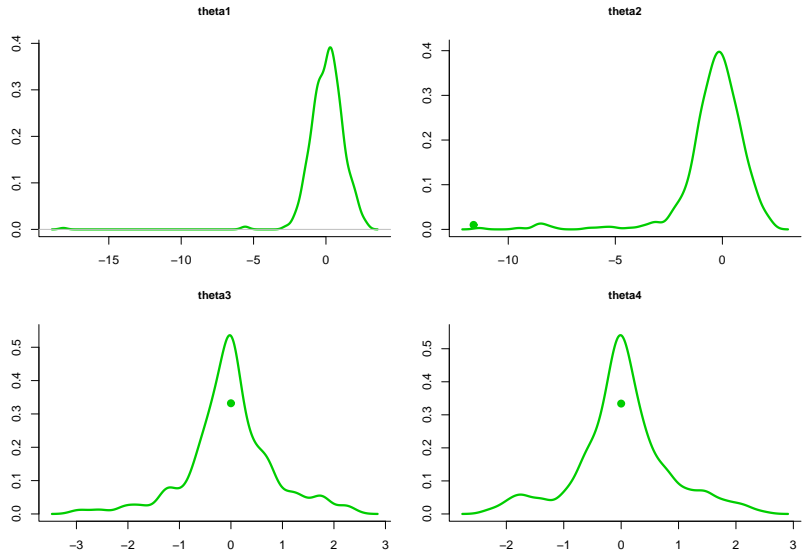}
\end{center}
\caption{Marginal distributions of the scaled and centered adaptive LASSO
estimator for $n=100$, $\protect\gamma =0$, i.e., $\protect\theta %
=(3,1.5,0,0)^{\prime }$, and $\protect\mu_{n}$ chosen by cross-validation.}
\end{figure}

\begin{figure}[tbp]
\begin{center}
\includegraphics[angle=-90, width=0.9\textwidth]{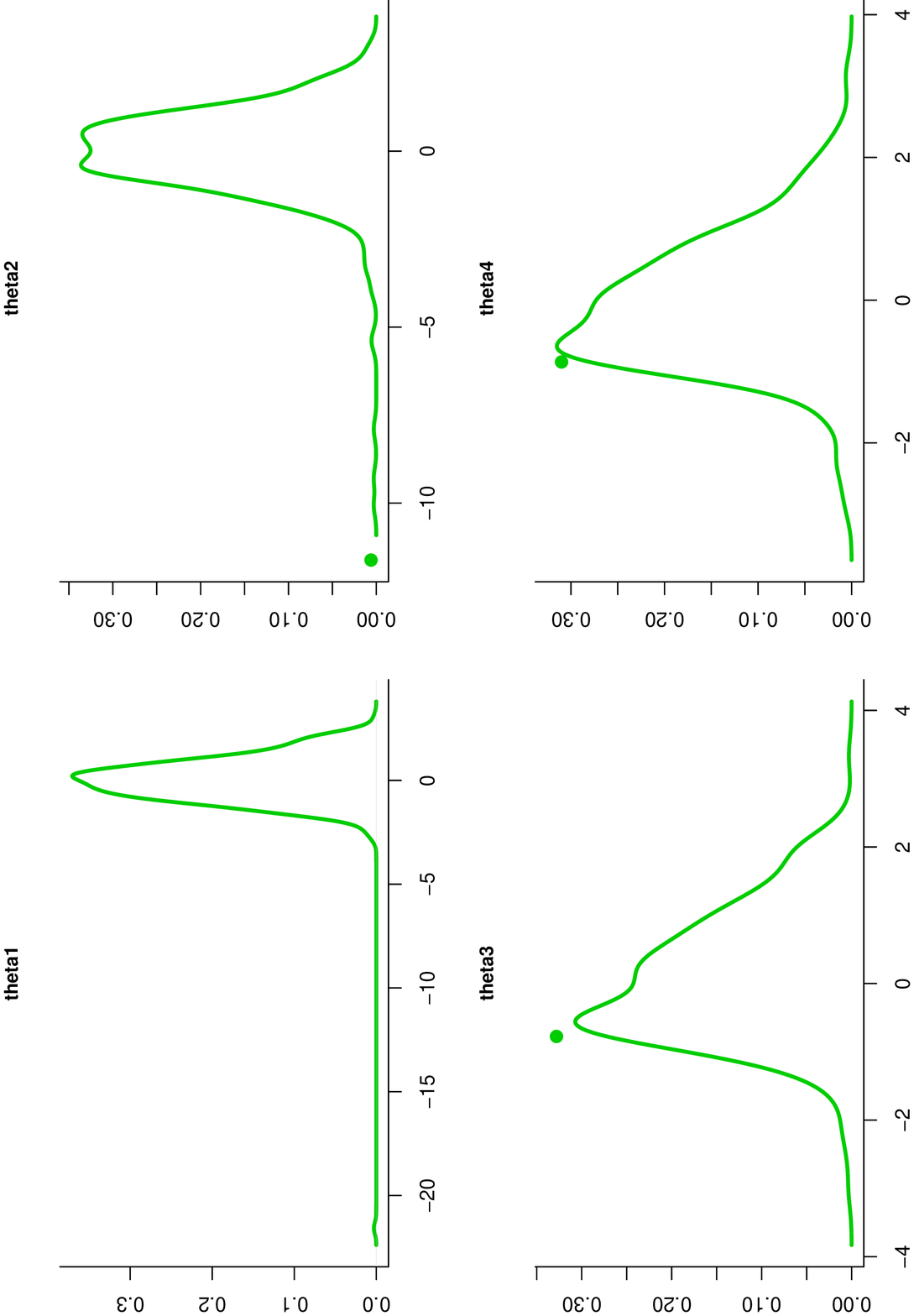}
\end{center}
\caption{Marginal distributions of the scaled and centered adaptive LASSO
estimator for $n=100$, $\protect\gamma =1$, i.e., $\protect\theta %
=(3,1.5,0.1,0.1)^{\prime }$, and $\protect\mu_{n}$ chosen by
cross-validation.}
\end{figure}

\begin{figure}[tbp]
\begin{center}
\includegraphics[angle=-90, width=0.9\textwidth]{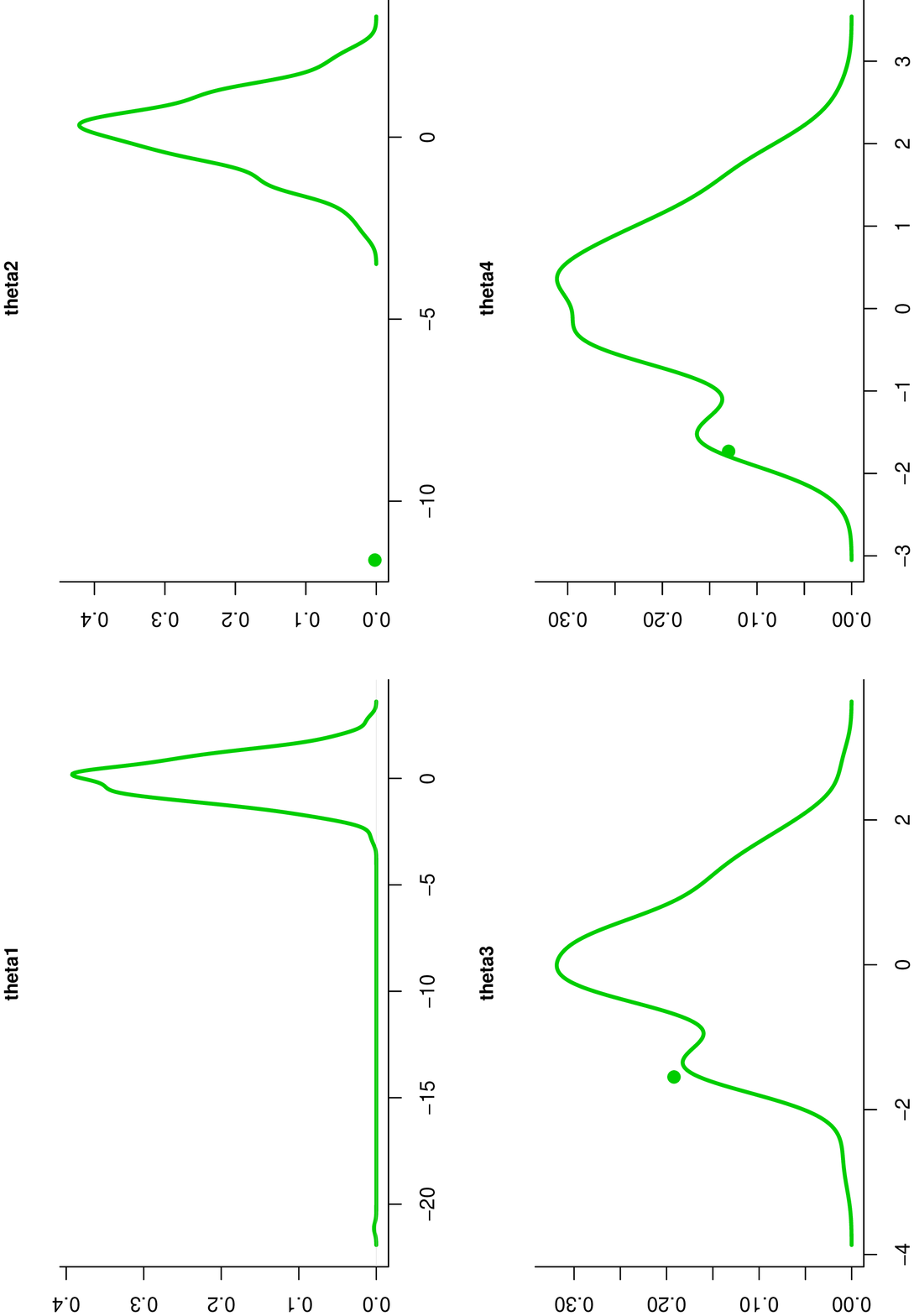}
\end{center}
\caption{Marginal distributions of the scaled and centered adaptive LASSO
estimator for $n=100$, $\protect\gamma =2$, i.e., $\protect\theta %
=(3,1.5,0.2,0.2)^{\prime }$, and $\protect\mu_{n}$ chosen by
cross-validation.}
\end{figure}

We have also experimented with other values of $\theta $ such as $\theta
=(3,1.5,\gamma n^{-1/2},0)^{\prime }$ or $\theta =(3,1.5,0,\gamma
n^{-1/2}\,)^{\prime }$, other values of $\gamma $ and other sample sizes
such as $n=60$ or $200$. The results were found to be qualitatively the same.


\section{Conclusion \label{conl}}


We have studied the distribution of the adaptive LASSO estimator, a
penalized least squares estimator introduced in \cite{zou06}, in
finite-samples as well as in the large-sample limit. The theoretical study
assumes an orthogonal regression model. The finite-sample distribution was
found to be a mixture of a singular normal distribution and an absolutely
continuous distribution, which is non-normal. The large-sample limit of the
distributions depends on the choice of the estimator's tuning parameter, and
we can distinguish two cases:

In the first case the tuning is such that the estimator acts as a
conservative model selector. In this case, the adaptive LASSO estimator is
found to be uniformly $n^{1/2}$-consistent. We also show that
fixed-parameter asymptotics (where the true parameter remains fixed while
sample size increases) only partially reflect the actual behavior of the
distribution whereas \textquotedblleft moving-parameter\textquotedblright\
asymptotics (where the true parameter may depend on sample size) gives a
more accurate picture. The moving-parameter analysis shows that the
distribution may be highly non-normal irrespective of sample size, in
particular, in the statistically interesting case where the true parameter
is close (in an appropriate sense) to a lower-dimensional submodel. This
also implies that the finite-sample phenomena that we have observed can
occur at any sample size.

In the second case, where the estimator is tuned to perform consistent model
selection, again fixed-parameter asymptotics do not capture the whole range
of large-sample phenomena that can occur. With `moving parameter'
asymptotics, we have shown that the distribution of these estimators can
again be highly non-normal, even in large samples. In addition, we have
found that the observed finite-sample phenomena not only can persist but
actually can be more pronounced for larger sample sizes. For example, the
distribution of the estimator (properly centered and scaled by $n^{1/2}$)
can diverge in the sense that all its mass escapes to either $+\infty $ or $%
-\infty $. In fact, we have established that the uniform convergence rate of
the adaptive LASSO estimator is slower than $n^{-1/2}$ in the consistent
model selection case. These findings are especially important as the
adaptive LASSO estimator has been shown in \cite{zou06} to possess an
'oracle' property (under an additional assumption on the tuning parameter),
which promises a convergence rate of $n^{-1/2}$ and a normal distribution in
large samples. However, the 'oracle' property is based on a fixed-parameter
asymptotic argument which, as our results show, gives highly misleading
results.

The findings mentioned above are based on a theoretical analysis (Section %
\ref{theory}) of the adaptive LASSO estimator in an orthogonal linear
regression model. The orthogonality restriction is removed in the Monte
Carlo analysis in Section \ref{simulation}. The results from this simulation
study confirm the theoretical results.

Finally, we have studied the problem of estimating the cdf of the (centered
and scaled) adaptive LASSO estimator. We have shown that this cdf cannot be
estimated in a uniformly consistent fashion, even though pointwise
consistent estimators can be constructed with relative ease.

We would like to stress that our results should not be read as a
condemnation of the adaptive LASSO estimator, but as a warning that the
distributional properties of this estimator are quite intricate and complex.

\appendix{}

\section{Appendix}


\textbf{Proof of Theorem \ref{consist_ther}: }Since (\ref{unif_consist_rate}%
) implies (\ref{unif_consist}), it suffices to prove the former. For this,
it is instructive to write $\hat{\theta}_{A}$ in terms of the
hard-thresholding estimator $\hat{\theta}_{H}$ as defined in \cite{poelee07}
(with $\eta _{n}=\mu _{n}$) by observing that 
\begin{equation*}
\hat{\theta}_{A}=\hat{\theta}_{H}-\limfunc{sign}(\hat{\theta}_{H})\mu
_{n}^{2}/|\bar{y}|.
\end{equation*}%
Here $\limfunc{sign}(x)=-1,0,1$ depending on whether $x<0,=0,>0$. Since $%
\hat{\theta}_{H}$ satisfies (\ref{unif_consist_rate}) as is shown in
Theorem~ 2 in \cite{poelee07}, it suffices to consider 
\begin{eqnarray*}
\sup_{\theta \in \mathbb{R}}P_{n,\theta }(a_{n}|\hat{\theta}_{H}-\hat{\theta}%
_{A}|>M) &=&\sup_{\theta \in \mathbb{R}}P_{n,\theta }(a_{n}\mu _{n}^{2}/|%
\bar{y}|>M,\hat{\theta}_{H}\neq 0) \\
&=&\sup_{\theta \in \mathbb{R}}P_{n,\theta }(a_{n}\mu _{n}^{2}/|\bar{y}|>M,|%
\bar{y}|>\mu _{n}) \\
&\leq &\boldsymbol{1}(a_{n}\mu _{n}>M).
\end{eqnarray*}%
Since $a_{n}\mu _{n}\leq 1$, the right-hand side in the above expression
equals zero for any $M>1$.\hfill $\blacksquare $

\begin{proposition}
\label{zeros_limit} Let $\theta _{n}\in \mathbb{R}$ and $0<\mu _{n}<\infty $%
. If $\theta _{n}/\mu _{n}\rightarrow -\infty $ and $n^{1/2}\theta
_{n}\rightarrow -\infty $, then $z_{n,\theta _{n}}^{(1)}(x)-x\sim n^{1/2}\mu
_{n}^{2}/\theta _{n}$ as $n\rightarrow \infty $ for every $x\in \mathbb{R}$.
If $\theta _{n}/\mu _{n}\rightarrow \infty $ and $n^{1/2}\theta
_{n}\rightarrow \infty $, then $z_{n,\theta _{n}}^{(2)}(x)-x\sim n^{1/2}\mu
_{n}^{2}/\theta _{n}$ for every $x\in \mathbb{R}$.
\end{proposition}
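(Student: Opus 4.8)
The plan is to work directly from the explicit formula (\ref{zeros_equ}) for the two roots and isolate the quantities $\znt - x$ and $\zno - x$. Substituting $\theta = \theta_n$ into (\ref{zeros_equ}) and subtracting $x$, a short computation gives
\[
\znt - x = -b_n + \sqrt{b_n^2 + n\mu_n^2}, \qquad \zno - x = -b_n - \sqrt{b_n^2 + n\mu_n^2},
\]
where I abbreviate $b_n = (n^{1/2}\theta_n + x)/2$. The key device is then to rationalize: multiplying by the conjugate turns each difference of a linear term and a square root into a single ratio,
\[
\znt - x = \frac{n\mu_n^2}{\sqrt{b_n^2 + n\mu_n^2} + b_n}, \qquad \zno - x = \frac{-n\mu_n^2}{\sqrt{b_n^2 + n\mu_n^2} - b_n}.
\]

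Next I would establish the single asymptotic fact that drives both cases, namely $n\mu_n^2/b_n^2 \to 0$. Indeed $n\mu_n^2/b_n^2 = 4n\mu_n^2/(n^{1/2}\theta_n + x)^2$; since $n^{1/2}\theta_n \to \pm\infty$ we have $(n^{1/2}\theta_n + x)^2 \sim (n^{1/2}\theta_n)^2 = n\theta_n^2$, so that $n\mu_n^2/b_n^2 \sim 4(\mu_n/\theta_n)^2 \to 0$ precisely because $\theta_n/\mu_n \to \pm\infty$. This is exactly where both hypotheses enter: the condition on $n^{1/2}\theta_n$ controls the size of $b_n$, while the condition on $\theta_n/\mu_n$ controls $\mu_n$ relative to $\theta_n$. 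It follows that $\sqrt{b_n^2 + n\mu_n^2} = |b_n|\sqrt{1 + n\mu_n^2/b_n^2} \sim |b_n|$.

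With this in hand I would finish each case separately by tracking the sign of $b_n$. In the second case ($\theta_n/\mu_n \to \infty$, $n^{1/2}\theta_n \to \infty$) one has $b_n \to +\infty$, hence $|b_n| = b_n$ and $\sqrt{b_n^2 + n\mu_n^2} + b_n \sim 2b_n \sim n^{1/2}\theta_n$; therefore $\znt - x \sim n\mu_n^2/(n^{1/2}\theta_n) = n^{1/2}\mu_n^2/\theta_n$, as claimed. In the first case ($\theta_n/\mu_n \to -\infty$, $n^{1/2}\theta_n \to -\infty$) one has $b_n \to -\infty$, hence $|b_n| = -b_n$ and $\sqrt{b_n^2 + n\mu_n^2} - b_n \sim 2|b_n| = -(n^{1/2}\theta_n + x) \sim -n^{1/2}\theta_n$; the minus sign in the numerator then combines with this to give $\zno - x \sim n\mu_n^2/(n^{1/2}\theta_n) = n^{1/2}\mu_n^2/\theta_n$ again.

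I do not expect a genuine obstacle, since after rationalization everything reduces to the limit $n\mu_n^2/b_n^2 \to 0$ together with elementary asymptotic-equivalence manipulations. The only point requiring care is the sign bookkeeping in the first case, where $b_n$ is negative: one must use $|b_n| = -b_n$ consistently so that the negative numerator and the denominator $\sqrt{b_n^2 + n\mu_n^2} - b_n$ combine to the correct sign, matching $n^{1/2}\mu_n^2/\theta_n$, which is itself negative since $\theta_n < 0$ eventually. Checking that the ratio of the two sides tends to $1$ (and not merely to some nonzero constant) just amounts to noting $(n^{1/2}\theta_n + x)/(n^{1/2}\theta_n) \to 1$.
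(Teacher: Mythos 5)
Your proof is correct and follows essentially the same route as the paper's: both start from the explicit root formula (\ref{zeros_equ}), isolate $z_{n,\theta_n}^{(i)}(x)-x$, and reduce everything to the single fact that $n\mu_n^2/((n^{1/2}\theta_n+x)/2)^2\rightarrow 0$, which is exactly where the two hypotheses enter. The only difference is cosmetic -- the paper extracts the leading behavior of the square root via a first-order expansion of $\sqrt{1+z}$ with a Lagrange-type remainder $\bar{z}_n$, whereas you rationalize by the conjugate; both devices give the same asymptotic equivalence, and your sign bookkeeping in the $b_n\rightarrow-\infty$ case is handled correctly.
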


\begin{proof}
We prove the first claim. We can write 
\begin{eqnarray*}
z_{n,\theta _{n}}^{(1)}(x)-x &=&-(n^{1/2}\theta _{n}+x)/2-\sqrt{%
((n^{1/2}\theta _{n}+x)/2)^{2}+n\mu _{n}^{2}} \\
&=&n^{1/2}\alpha _{n}(x)\left\{ -1+\sqrt{1+(\mu _{n}/\alpha _{n}(x))^{2}}%
\right\}
\end{eqnarray*}%
with $n^{1/2}\alpha _{n}(x)=(n^{1/2}\theta _{n}+x)/2$ where the last
equality holds for large $n$ since $n^{1/2}\alpha _{n}(x)<0$ eventually.
Through an expansion of $\sqrt{1+z}$ about zero, we obtain 
\begin{eqnarray*}
z_{n,\theta _{n}}^{(1)}(x)-x &=&n^{1/2}(\mu _{n}^{2}/\alpha _{n}(x))(1+\bar{z%
}_{n})^{-1/2}/2 \\
&=&(n^{1/2}\mu _{n}^{2}/\theta _{n})(1+x/(n^{1/2}\theta _{n}))^{-1}(1+\bar{z}%
_{n})^{-1/2},
\end{eqnarray*}%
with $0\leq \bar{z}_{n}\leq (\mu _{n}/\alpha _{n}(x))^{2}$. Note that $\mu
_{n}/\alpha _{n}(x)=2(\mu _{n}/\theta _{n})(1+x/(n^{1/2}\theta
_{n}))^{-1}\rightarrow 0$, and hence $\bar{z}_{n}\rightarrow 0$ holds. The
claim now follows. The second claim is proved analogously.
\end{proof}

\textbf{Proof of Theorem \ref{asymp_conserv}: }We derive the corresponding
asymptotic distributions by studying the limit behavior of (\ref{fscdf})
with $\theta $ replaced by $\theta _{n}$. If $\nu \in \mathbb{R}$ the result
immediately follows, since $F_{A,n,\theta _{n}}(x)$ converges to the limit
given above for every $x\neq -\nu $ as a consequence of (\ref{zeros_equ})
and $n^{1/2}\theta _{n}\rightarrow \nu $. For the case $\nu =\infty $, note
that the indicator function of the first term in (\ref{fscdf}) goes to $1$
for every $x\in \mathbb{R}$, whereas the second one goes to $0$.
Furthermore, we clearly have $\theta _{n}/\mu _{n}\rightarrow \infty $ since 
$0\leq \mathfrak{m}<\infty $ holds. Therefore we can apply Proposition~\ref%
{zeros_limit} to find that $z_{n,\theta _{n}}^{(2)}(x)\rightarrow x$ since $%
n^{1/2}\mu _{n}^{2}/\theta _{n}=n^{1/2}\mu _{n}(\mu _{n}/\theta
_{n})\rightarrow \mathfrak{m}\cdot 0=0$. This implies that $F_{A,n,\theta
}(x)\rightarrow \Phi (x)$ for all $x\in \mathbb{R}$ in case $\nu =\infty $.
A similar argument can be made to prove the claim for $\nu =-\infty $.\hfill 
$\blacksquare $

\textbf{Proof of Theorem \ref{asymp_consist}: }If $\left\vert \zeta
\right\vert <1$, Proposition~\ref{selection_prob} shows that the total mass
of the atomic part (\ref{singpt}) of the distribution $F_{A,n,\theta _{n}}$
goes to $1$; furthermore, the location of the atomic part, i.e., $%
-n^{1/2}\theta _{n}$, then converges to $-\nu \in \mathbb{R}$ or to $\pm
\infty $. This proves the theorem in case $\left\vert \zeta \right\vert <1$.
We prove the remaining cases by inspecting the limit behavior of (\ref{fscdf}%
), again with $\theta _{n}$ replacing $\theta $. To derive the limits for $%
1\leq \left\vert \zeta \right\vert \leq \infty $, note that $n^{1/2}\theta
_{n}\rightarrow \limfunc{sign}(\zeta )\,\infty $, so that by assessing the
limit of the indicator functions in (\ref{fscdf}), it can easily be seen
that $F_{A,n,\theta _{n}}(x)$ converges to the limit of $\Phi (z_{n,\theta
_{n}}^{(2)}(x))$ for $\zeta >0$ and to the limit of $\Phi (z_{n,\theta
_{n}}^{(1)}(x))$ for $\zeta <0$. Elementary calculations show that $%
z_{n,\theta _{n}}^{(2)}(x)\rightarrow \infty $ for $1\leq \zeta <\infty $
and that $z_{n,\theta _{n}}^{(1)}(x)\rightarrow -\infty $ for $-\infty
<\zeta \leq -1$. As a consequence of Proposition~\ref{zeros_limit}, also $%
z_{n,\theta _{n}}^{(2)}(x)\rightarrow \infty $ if $\zeta =\infty $ and $%
n^{1/2}\mu _{n}^{2}/\theta _{n}\rightarrow \infty $; similarly, $z_{n,\theta
_{n}}^{(1)}(x)\rightarrow -\infty $ if $\zeta =-\infty $ and $n^{1/2}\mu
_{n}^{2}/\theta _{n}\rightarrow -\infty $. This then proves the remaining
cases in part 2. Under the assumptions of part 3, an application of
Proposition~\ref{zeros_limit} gives that $z_{n,\theta
_{n}}^{(2)}(x)\rightarrow x+r$ if $\zeta =\infty $ and that $z_{n,\theta
_{n}}^{(1)}(x)\rightarrow x+r$ if $\zeta =-\infty $, which then proves part
3.\hfill $\blacksquare $

\textbf{Proof of Theorem \ref{asymp_rescaled}: }To prove part 1, observe
that Proposition~\ref{selection_prob} implies $\lim_{n\rightarrow \infty
}P_{n,\theta _{n}}(\hat{\theta}_{A}=0)=1$ for $|\zeta |<1$. This entails%
\begin{align*}
\lim_{n\rightarrow \infty }P_{n,\theta _{n}}(\mu _{n}^{-1}(\hat{\theta}%
_{A}-\theta _{n})\leq x)& =\lim_{n\rightarrow \infty }P_{n,\theta _{n}}(\mu
_{n}^{-1}(\hat{\theta}_{A}-\theta _{n})\leq x,\,\hat{\theta}_{A}=0) \\
& =\lim_{n\rightarrow \infty }\boldsymbol{1}(-\theta _{n}/\mu _{n}\leq x)=%
\boldsymbol{1}(x\geq -\zeta )
\end{align*}%
for $x\neq -\zeta $, which establishes part 1. Next, observe that%
\begin{align}
G_{A,n,\theta _{n}}(x)& =  \notag \\
\boldsymbol{1}(\theta _{n}/\mu _{n}& +x\geq 0)\Phi (w_{n,\theta
_{n}}^{(2)}(x))+\boldsymbol{1}(\theta _{n}/\mu _{n}+x<0)\Phi (w_{n,\theta
_{n}}^{(1)}(x))  \label{fscdfrescaled}
\end{align}%
where $w_{n,\theta _{n}}^{(1)}(x)$ and $w_{n,\theta _{n}}^{(2)}(x)$ with $%
w_{n,\theta _{n}}^{(1)}(x)\leq w_{n,\theta _{n}}^{(2)}(x)$ are given by%
\begin{equation}
n^{1/2}\mu _{n}\left\{ (-\theta _{n}/\mu _{n}+x)\pm \sqrt{(\theta _{n}/\mu
_{n}+x)^{2}+4}\right\} /2.  \label{w}
\end{equation}%
Under the conditions of part 2, the first indicator function in (\ref%
{fscdfrescaled}) tends to $1$ for $x>-\zeta $ and to $0$ for $x<-\zeta $.
Consequently, $G_{A,n,\theta _{n}}(x)$ converges to $\lim_{n\rightarrow
\infty }\Phi (w_{n,\theta _{n}}^{(2)}(x))$ if $x>-\zeta $, and to $%
\lim_{n\rightarrow \infty }\Phi (w_{n,\theta _{n}}^{(1)}(x))$ if $x<-\zeta $
(provided the limits exist). Elementary calculations show that for $\zeta
\geq 1$ we have $w_{n,\theta _{n}}^{(1)}(x)\rightarrow -\infty $ for all $%
x\in \mathbb{R}$, $w_{n,\theta _{n}}^{(2)}(x)\rightarrow -\infty $ for $%
x<-1/\zeta $, and $w_{n,\theta _{n}}^{(2)}(x)\rightarrow \infty $ for $%
x>-1/\zeta $. For $\zeta \leq -1$ we obtain $w_{n,\theta
_{n}}^{(1)}(x)\rightarrow -\infty $ for $x<-1/\zeta $, $w_{n,\theta
_{n}}^{(1)}(x)\rightarrow \infty $ for $x>-1/\zeta $, and $w_{n,\theta
_{n}}^{(2)}(x)\rightarrow \infty $ for all $x\in \mathbb{R}$. Consequently,
for $x\neq -\zeta $, we find $G_{A,n,\theta _{n}}(x)\rightarrow 0$ for $%
x<-1/\zeta $ and $G_{A,n,\theta _{n}}(x)\rightarrow 1$ for $x>-1/\zeta $. If 
$\left\vert \zeta \right\vert =1$, the result in part 2 follows. If $%
\left\vert \zeta \right\vert >1$, convergence of $G_{A,n,\theta _{n}}(-\zeta
)$ to the proper limit follows from monotonicity of $G_{A,n,\theta _{n}}$
and the fact that $x=-\zeta $ is a continuity point of the limit
distribution. This then completes the proof of part 2.

For part 3 we consider first the case $\zeta =\infty $. Clearly, $%
G_{A,n,\theta _{n}}(x)\;$converges to $\lim_{n}\Phi (w_{n,\theta
_{n}}^{(2)}(x))$. Since%
\begin{equation*}
w_{n,\theta _{n}}^{(2)}(x)=n^{1/2}\mu _{n}\left\{ (-\theta _{n}/\mu _{n}+x)+%
\sqrt{(\theta _{n}/\mu _{n}+x)^{2}+4}\right\} /2
\end{equation*}%
by (\ref{w}), and because $\theta _{n}/\mu _{n}\rightarrow \infty $, it is
easy to see that $w_{n,\theta _{n}}^{(2)}(x)$ converges to $\infty $ if $x>0$
and to $-\infty $ if $x<0$. The case where $\zeta =-\infty $ is proved
analogously.\hfill $\blacksquare $

\textbf{Proof of Theorem \ref{imp1}: }Let $\theta _{n}(\delta )$ be
short-hand for $-(t+\delta )/n^{1/2}$. Elementary calculations show that%
\begin{equation}
\lim_{\delta \downarrow 0}\left\vert F_{A,n,\theta _{n}(-\delta
)}(t)-F_{A,n,\theta _{n}(\delta )}(t)\right\vert =\Phi (t+n^{1/2}\mu
_{n})-\Phi (t-n^{1/2}\mu _{n}).  \label{z}
\end{equation}%
In particular, this implies that the supremum of $\left\vert F_{A,n,\theta
_{n}(-\delta )}(t)-F_{A,n,\theta _{n}(\delta )}(t)\right\vert $ over $0\leq
\delta <c-\left\vert t\right\vert $ is bounded from below by $\Phi
(t+n^{1/2}\mu _{n})-\Phi (t-n^{1/2}\mu _{n})$. The rest of the argument then
proceeds similar as in the proof of Theorem 13 in \cite{poelee07}.\hfill $%
\blacksquare $

\textbf{Proof of Theorem \ref{imp2}: }Analogous to the proof of Theorem 14
in \cite{poelee07} except for using (\ref{z}) in place of (11) in \cite%
{poelee07}.\hfill $\blacksquare $

\textbf{Proof of Theorem \ref{imp3}: }Analogous to the proof of Theorem 18
in \cite{poelee07}.\hfill $\blacksquare $

\bibliographystyle{biometrika}
\bibliography{stringsFULL,statbib}


\end{document}